\newtheorem{defin}{}
\newtheorem{saetze}[defin]{}
\newtheorem{lemmas}[defin]{}
\newtheorem{folger}[defin]{}
\newtheorem{bemerk}[defin]{}
\newtheorem{exampl}[defin]{}
\newtheorem{scholi}[defin]{}
\newtheorem{conjec}[defin]{}
\newenvironment{theorem}  {\begin{saetze}\it {\bf Theorem:}}{\end{saetze}}
\newenvironment{scholium} {\begin{scholi}\it {\bf Scholium:}}{\end{scholi}}
\newenvironment{conjecture} {\begin{conjec}\it {\bf Conjecture:}}{\end{conjec}}
\newenvironment{lemma}    {\begin{lemmas}\it {\bf Lemma:}}{\end{lemmas}}
\newenvironment{remark}   {\begin{bemerk}\it {\bf Remark:}}{\end{bemerk}}
\newenvironment{example}  {\begin{exampl}\it {\bf Example:}}{\end{exampl}}
\newenvironment{proof}    {{\it Proof}:}{{\hfill \fillbox \bigskip}}
\newcommand{\fillbox}{\mbox{$\bullet$}}
\newcommand{\ra}{\rightarrow}
\newcommand{\ms}{\mapsto}
\newcommand{\ol}{\overline}
\newcommand{\ti}{\tilde}
\newcommand{\N}{\mathbb N}
\newcommand{\Z}{\mathbb Z}
\newcommand{\RELS}{\mathcal R}
\newcommand{\GAP}{{\sf GAP }}
\newenvironment{items}{\begin{list}{$\alph{item})$}
{\labelwidth18pt \leftmargin18pt \topsep3pt \itemsep1pt \parsep0pt}}
{\end{list}}
\newcommand{\bulit}{\item[$\bullet$]}
\begin{document}

\title{A nilpotent quotient algorithm \\ for $L$-presented groups}
\author{Bettina Eick and Ren\'{e} Hartung\\
        with an appendix by Laurent Bartholdi}
\date{June 21, 2007}
\maketitle

\begin{abstract}
The main part of this paper contains a description of a nilpotent quotient 
algorithm for $L$-presented groups and a report on applications of its 
implementation in the computer algebra system {\sf GAP}. The appendix 
introduces
two new infinite series of $L$-presented groups. Apart from being of 
interest in their own right, these new $L$-presented groups serve as 
examples for applications of the nilpotent quotient algorithm.
\end{abstract}

\section{Introduction}

A very useful group theoretic construction is that of an ascending
HNN-extension: for example, this can have the form $\ol{G} = \langle 
G, t \mid g^t = \varphi(g) \mbox{ for all } g \in G \rangle$ where 
$\varphi$ is an injective endomorphism of $G$. Such a construction 
is often used to embed a non-finitely-presented group $G$ into a 
finitely presented group $\ol{G}$ to obtain a compact description of 
the group $G$.

$L$-presentations have been introduced by Bartholdi \cite{Bar03} based
on work of Lysenok \cite{Lys85}. They provide a framework to describe 
the base group $G$ of an HNN-extension, even though it is not finitely 
presented, and they extend this key idea further. More precisely, an 
$L$-presentation is an expression of the form 
$\langle S \mid Q \mid \Phi \mid R \rangle$,
where $Q$ and $R$ are subsets of the free group $F_S$ and $\Phi$ is 
a set of endomorphisms of $F_S$, and it defines the group
\[ F_S / \langle Q \cup \bigcup_{\varphi \in \Phi^*} \varphi(R) 
         \rangle^{F_S}, \]
where $\Phi^*$ is the monoid generated by $\Phi$. In \cite{Bar03} it is 
proved that a broad class of groups acting on rooted trees admit an 
explicitly constructible finite $L$-presentation. Well-known examples 
of finitely $L$-presented, but not finitely presented groups are the 
Grigorchuk group and the Gupta-Sidki group which both play a role in
the study of the famous Burnside problems.

The central aim of the main part of this paper is to describe a nilpotent 
quotient algorithm for finitely $L$-presented groups. This takes as input a
finitely $L$-presented group $G$ and a positive integer $n$ and determines 
a nilpotent presentation for the class-$n$ quotient $G / \gamma_{n+1}(G)$. 
Thus this algorithm can also determine the abelian invariants of $G$ and the 
largest nilpotent quotient of $G$ if it exists. The algorithm described here
generalises the nilpotent quotient algorithm for finitely presented groups 
by Nickel \cite{Nic95}. 

An implementation of the algorithm described here is available in the 
{\sf NQL} package \cite{NQL} of the computer algebra system \GAP \cite{Gap}. 
Sample applications and a report on runtimes are included below.

In the appendix to this paper, two new infinite series of $L$-presented 
groups are described. The first series generalises the Fabrykowski-Gupta 
group \cite{FGu85}, the second series generalises the Gupta-Sidki group 
\cite{GSi83}.  

We investigated some of the groups in these series using the nilpotent 
quotient algorithm. As a result, we can conjecture that the generalised 
Fabrykowski-Gupta groups have finite width and we can identify a subfamily 
of these groups with width 2 and very slim lower central series. The 
generalised Gupta-Sidki groups do not exhibit a similarly obvious pattern 
in their lower central series factors and they could have infinite width.

\section{More about $L$-presentations}

In this section we recall the basic notions used to work with $L$-presented
groups and we exhibit some examples. First we note that an $L$-presentation 
$\langle S \mid Q \mid \Phi \mid R \rangle$ is called
\begin{items}
\bulit finite, if $S$,$Q$,$R$, and $\Phi$ are finite,
\bulit ascending, if $Q$ is empty, and
\bulit invariant, if 
$K = \langle Q \cup \bigcup_{\varphi \in \Phi^*} \varphi(R) \rangle^{F_S}$
satisfies $\varphi(K) \subseteq K$ for every $\varphi \in \Phi$.
\end{items}

In the remainder of this paper we are concerned with finite $L$-presentations
only. Invariant $L$-presentations will play an important role for our 
algorithm. We record some basic observations on $L$-presentations in the 
following remark.

\begin{remark}
\begin{items}
\bulit
Every ascending $L$-presentation is invariant. Conversely, if the 
$L$-presentation $\langle S \mid Q \mid \Phi \mid R \rangle$ is
invariant, then it determines the same group as the ascending 
$L$-presentation $\langle S \mid \emptyset \mid \Phi \mid Q \cup R 
\rangle$.
\bulit
Every finite presentation $\langle S \mid R \rangle$ can be written as
a finite $L$-presentation in the form $\langle S \mid R \mid \emptyset 
\mid \emptyset \rangle$ or in the form $\langle S \mid \emptyset \mid 
\{id\} \mid R \rangle$. The second form shows that every finite 
presentation can be written as a finite ascending $L$-presentation.
\end{items}
\end{remark}

Many of the well-known examples of $L$-presentations are invariant or even
ascending. A famous example for this case is the Grigorchuk group, see
\cite{Lys85} and \cite{Bar03} for details.

\begin{example} \label{examGr}
The Grigorchuk group can be defined by the following ascending (and hence
invariant) finite $L$-presentation.
  \[ \langle a,c,d \mid 
          \emptyset \mid 
          \{\sigma'\} \mid 
          \{a^2,[d,d^a],[d,d^{acaca}] \}
  \rangle, \]
with
  \[ \sigma'\colon\left\{\begin{array}{ccl}
    a&\mapsto&c^a\\
    c&\mapsto&cd\\
    d&\mapsto&c
  \end{array}\right\}. \]

We note that there are other finite $L$-presentations for the Grigorchuk
group known. An example is the following non-ascending, but invariant
$L$-presentation.
  \[
     \langle a,b,c,d \mid 
         \{a^2,b^2,c^2,d^2,bcd \} \mid
         \{ \sigma \} \mid 
         \{[d,d^a],[d,d^{acaca}]\}
     \rangle,
  \]
  with
  \[
  \sigma\colon\left\{\begin{array}{ccl}
    a&\mapsto&c^a\\
    b&\mapsto&d\\
    c&\mapsto&b\\
    d&\mapsto&c
  \end{array}\right\}.
  \]
\end{example}

\section{Polycyclic and nilpotent presentations}

Every finitely generated nilpotent group is polycyclic and hence can be 
described by a consistent polycyclic presentation. This type of presentation 
allows effective computations with the considered group and thus it
facilitates detailed investigations of the underlying group.

In this section we recall the definitions and some of the basic ideas on 
polycyclic presentations with particular emphasis on finitely generated
nilpotent groups. Further information and references can be found in 
\cite{HEO05}, Chapter X.

A {\em polycyclic presentation} is a presentation on a sequence of 
generators, $g_1, \ldots, g_n$ say, whose relations have the following
form for certain $r_1, \ldots, r_n \in \N \cup \{ \infty \}$:
\begin{eqnarray*}
g_i^{g_j} &=& g_{j+1}^{e_{i,j,j+1}} \cdots g_n^{e_{i,j,n}}
              \mbox{ for } j < i, \\
g_i^{g_j^{-1}} &=& g_{j+1}^{f_{i,j,j+1}} \cdots g_n^{f_{i,j,n}} 
              \mbox{ for } j < i, \mbox{ and }  \\
g_i^{r_i} &=& g_{i+1}^{l_{i,j,i+1}} \cdots g_n^{l_{i,j,n}} 
              \mbox{ for all } i \mbox{ with } r_i < \infty. \\
\end{eqnarray*}

Let $G$ be the group defined by the above presentation and let
$G_i = \langle g_i, \ldots, g_n \rangle \leq G$. Then the above relations
imply that the series $G = G_1 \unrhd G_2 \unrhd \ldots \unrhd G_n \unrhd 
G_{n+1} = \{1\}$ is a subnormal series with cyclic factors. We say that
this is the {\em polycyclic series} defined by the presentation. 

The factors of this polycyclic series satisfy $[G_i : G_{i+1}] \leq r_i$ 
for $1 \leq i \leq n$. The polycyclic presentation is called {\em consistent} 
if $[G_i : G_{i+1}] = r_i$ for $1 \leq i \leq n$. The consistency of a 
polycyclic presentation can be checked effectively, see \cite{Sim94}, page 424. 

Nilpotent presentations are a special case of polycyclic presentations for
finitely generated nilpotent groups. Let $G = \gamma_1(G) \geq \gamma_2(G) 
\geq \ldots$ denote the lower central series of $G$. Then we say that a 
polycyclic presentation of $G$ is a {\em nilpotent presentation} if its 
polycyclic series refines the lower central series of $G$.

A nilpotent presentation is called {\em weighted}, if there exists a 
function $w : \{g_1, \ldots, g_n\} \ra \N$ such that $w(g_k) = 1$ if 
and only if $g_k \not \in \gamma_2(G)$, and if $w(g_k) > 1$, then 
there exists a relation $g_i^{g_j} = g_i g_k$ with $j < i < k$ so that
$w(g_j) = 1$ and $w(g_i) = w(g_k)-1$.

\section{Computing abelian invariants}
\label{abelquot}

Let $G = \langle S \mid Q \mid \Phi \mid R \rangle$ be a group given by a
finite $L$-presentation. In this section we describe a method to determine 
the abelian invariants of $G$ and a corresponding consistent nilpotent 
presentation of the abelian group $G / \gamma_2(G) = G/G'$.

Our method is a direct generalisation of the well-known approach to determine
the abelian invariants of a finitely presented group. We refer to \cite{Sim94}
or \cite{HEO05} for further information.

Let $S = \{ s_1, \ldots, s_m \}$ and $F$ the free group on $S$.
Then every element $w \in F$ is a word in $S \cup S^{-1}$, say $w = 
s_{i_1}^{e_1} \cdots s_{i_l}^{e_l}$ with $e_i = \pm 1$. Define $a_j = 
\sum_{i_k = j} e_k \in \Z$ for $1 \leq j \leq m$ and let $\ol{w} = 
s_1^{a_1} \cdots s_m^{a_m}$. Then $\ol{w}$ can be considered as the 
collected word corresponding to $w$. It satisfies $wF' = \ol{w}F'$
and hence $\ol{w}$ is a representative of the coset $wF'$. Translating 
to additive notation, we can represent $\ol{w}$ by the vector $a_w = 
(a_1, \ldots, a_m) \in \Z^m$.

Every endomorphism $\varphi$ of $F$ satisfies $\varphi(F') \subseteq F'$ 
and hence induces an endomorphism $\ol{\varphi}$ of $F/F'$. Translating to 
additive notation as above, we can represent $\ol{\varphi}$ by a matrix 
$M_\varphi \in M_m(\Z)$ which acts by multiplication from the right on
$\Z^m$ as $\varphi$ acts on $F/F'$. Thus we obtain a homomorphism $End(F) 
\ra M_m(\Z) : \varphi \ms M_\varphi$. These constructions yield the 
following description of $G/\gamma_2(G)$.

\begin{lemma} \label{abelisom}
$G/ \gamma_2(G) \cong \Z^m / U_G$ where
$U_G = \langle a_q, a_r M_\varphi \mid 
             q \in Q, r \in R, \varphi \in \Phi^* \rangle$.
\end{lemma}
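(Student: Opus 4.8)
The plan is to unravel the definition of $G/\gamma_2(G)$ and match it term-by-term with the claimed quotient $\Z^m/U_G$. Recall that $G = F/N$ where $N = \langle Q \cup \bigcup_{\varphi \in \Phi^*} \varphi(R)\rangle^F$, so $\gamma_2(G) = NF'/N$ and hence $G/\gamma_2(G) \cong F/NF'$. Since $F/F' \cong \Z^m$ via $wF' \mapsto a_w$, it suffices to identify the image of $NF'/F'$ in $\Z^m$ with $U_G$; then the third isomorphism theorem gives $F/NF' \cong (F/F')/(NF'/F') \cong \Z^m/U_G$.

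First I would observe that $NF'/F'$ is the subgroup of the abelian group $F/F'$ generated by the cosets of all elements of $Q \cup \bigcup_{\varphi\in\Phi^*}\varphi(R)$ --- passing to an abelian quotient, a normal closure is just the ordinary subgroup generated by the same set, since conjugates $g^{-1}wg$ and $w$ have the same image mod $F'$. Translating through the isomorphism $F/F' \cong \Z^m$, the generator $qF'$ maps to $a_q$ for $q \in Q$. For the relators coming from $R$, I would use the homomorphism property $\varphi \mapsto M_\varphi$ established just before the lemma: for $r \in R$ and $\varphi = \varphi_{i_1}\cdots\varphi_{i_k} \in \Phi^*$, the element $\overline{\varphi(r)}$ corresponds in $\Z^m$ to $a_r M_{\varphi_{i_1}}\cdots M_{\varphi_{i_k}} = a_r M_\varphi$, because $\overline{\varphi}$ acts on $F/F'$ as right multiplication by $M_\varphi$ and $w \mapsto a_w$ is $\overline{\varphi}$-equivariant by construction. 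Hence the image of $NF'/F'$ in $\Z^m$ is exactly $\langle a_q, a_r M_\varphi \mid q \in Q, r \in R, \varphi \in \Phi^*\rangle = U_G$, as required.

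There is no serious obstacle here; the statement is essentially a bookkeeping exercise once the $\Phi^* \to M_m(\Z)$ homomorphism and the vector encoding $w \mapsto a_w$ are in place. The only point deserving a line of care is that the normal closure collapses to an ordinary subgroup after abelianising, and that the (possibly infinite) generating set indexed by $\varphi \in \Phi^*$ is handled correctly --- $\Phi^*$ is a monoid, not just a set, so one should note that $M_{\varphi\psi} = M_\psi M_\varphi$ or the appropriate order convention matching "acts by multiplication from the right," so that the subgroup $U_G$ is genuinely closed under the relevant operations and the indexing by $\Phi^*$ rather than by finite words in $\Phi$ is well-defined. Everything else is the standard identification $F/NF' \cong (F/F')/\overline{N}$.
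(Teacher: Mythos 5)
Your proof is correct and is precisely the argument the paper has in mind: the lemma is stated there without any proof, as an immediate consequence of the constructions $w \mapsto a_w$ and $\varphi \mapsto M_\varphi$ set up just before it. Your write-up supplies exactly the omitted bookkeeping --- that abelianising collapses the normal closure to an ordinary subgroup, that the third isomorphism theorem reduces everything to identifying the image of $N F'/F'$ in $\Z^m$, and that the $\ol{\varphi}$-equivariance of $w \mapsto a_w$ sends $\varphi(r)F'$ to $a_r M_\varphi$ --- together with the (worthwhile) remark about the order convention in the monoid homomorphism $\Phi^* \to M_m(\Z)$.
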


If a subgroup $V$ of $\Z^m$ is given by a finite set of generators, then 
algorithms for membership testing in $V$ and for computing the abelian 
invariants of the quotient $\Z^m/V$ together with a corresponding minimal 
generating set for this quotient are described in \cite{Sim94}, Chapter 8. 
Both methods rely mainly on Hermite normal form computations of matrices. The
latter allows to read off a consistent nilpotent presentation for $\Z^m/V$.

To apply these methods in our setting, it remains to determine a finite 
generating set for the subgroup $U_G$ of $\Z^m$ as defined in Lemma 
\ref{abelisom}. The following straightforward method achieves this aim.
Note that this method terminates, since ascending chains of subgroups
in $\Z^m$ terminate.

\begin{tabbing}
Fin\=ite\=Gen\=era\=tingSet( $U_G$ ) \\
\> initialise $U := \{ a_q, a_r \mid q \in Q, r \in R \}$ \\
\> initialise $T := \{ a_r \mid r \in R \}$ \\
\> while $T \neq \emptyset$ do \\
\> \> choose $t \in T$ and delete $t$ from $T$ \\
\> \> for $\varphi$ in $\Phi$ do \\
\> \> \> compute $s := t M_\varphi$ \\
\> \> \> if $s \not \in \langle U \rangle$ then 
             add $s$ to $U$ and add $s$ to $T$ \\
\> \> end for \\
\> end while \\
\> return $U$ 
\end{tabbing}

This completes our algorithm to determine the abelian invariants of $G$
and a consistent nilpotent presentation of $G/\gamma_2(G)$ for the group
$G$ which is given by a finite $L$-presentation. Further, this presentation
can be considered as weighted by assigning the weight 1 to every generator.

\section{Computing nilpotent quotients I}
\label{nilpquotI}

The algorithm of Section \ref{abelquot} generalises readily to a method 
for determining nilpotent quotients. This is straightforward to describe, 
but the resulting algorithm is usually not very effective in its applications. 
We include a description of this generalisation here for completeness and 
we refer to Section \ref{nilpquotII} for a significantly more effective 
approach towards computing nilpotent quotients.

Let $G = \langle S \mid Q \mid \Phi \mid R \rangle$ be a group given by a
finite $L$-presentation and let $n \in \N$. We wish to determine a consistent
polycyclic presentation for the quotient $G / \gamma_n(G)$. As above, let 
$F$ be the free group on $S$. Then a consistent polycyclic presentation for
a group $H$ with $H \cong F/\gamma_n(F)$ together with the corresponding 
natural epimorphism $\epsilon : F \ra H$ can be determined using a nilpotent 
quotient algorithm for finitely presented groups or theoretical background 
on free groups.

As $\gamma_n(F)$ is invariant under every endomorphism $\varphi$ of $F$, 
we obtain that $\varphi$ induces an endomorphism $\ol{\varphi}$ of the
quotient $F/\gamma_n(F)$. This endomorphism $\ol{\varphi}$ can be translated 
to an endomorphism $\ti{\varphi}$ of $H$ via $\epsilon$. Thus we obtain a
homomorphism $End(F) \ra End(H) : \varphi \ms \ti{\varphi}$. This setting
yields the following description of $G/\gamma_n(G)$.

\begin{lemma} \label{nilpisom}
$G / \gamma_n(G) \cong H / (U_G)^H$ where
$U_G = \langle \epsilon(q), \ti{\varphi} (\epsilon(r)) \mid 
             q \in Q, r \in R, \varphi \in \Phi^* \rangle$.
\end{lemma}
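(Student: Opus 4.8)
The plan is to unwind the definition of the $L$-presented group $G$ and match it term by term against the right-hand side. Recall that $G = F/N$ where $N = \langle Q \cup \bigcup_{\varphi \in \Phi^*} \varphi(R) \rangle^F$, so $G/\gamma_n(G) \cong F/N\gamma_n(F)$. First I would observe that $N\gamma_n(F)/\gamma_n(F)$ is exactly the normal closure in $F/\gamma_n(F)$ of the image of $Q \cup \bigcup_{\varphi \in \Phi^*} \varphi(R)$, and then transport this across the isomorphism $\epsilon : F/\gamma_n(F) \to H$ (or rather the epimorphism $\epsilon : F \to H$ with kernel $\gamma_n(F)$). This gives $G/\gamma_n(G) \cong H / \langle \epsilon(q), \epsilon(\varphi(r)) \mid q \in Q, r \in R, \varphi \in \Phi^* \rangle^H$.

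The one substantive point is then to check that $\epsilon(\varphi(r)) = \ti{\varphi}(\epsilon(r))$ for every $\varphi \in \Phi^*$ and $r \in R$, i.e. that the diagram defining $\ti\varphi$ actually commutes. For $\varphi \in \Phi$ this is the definition: $\varphi$ induces $\ol\varphi$ on $F/\gamma_n(F)$ because $\gamma_n(F)$ is characteristic, and $\ti\varphi$ is $\ol\varphi$ transported along $\epsilon$, so $\epsilon \circ \varphi = \ti\varphi \circ \epsilon$ on $F$. For a general element of the monoid $\Phi^*$, written $\varphi = \psi_1 \cdots \psi_k$ with each $\psi_i \in \Phi$, I would argue that $\varphi \ms \ti\varphi$ is a monoid homomorphism $\Phi^* \to End(H)$ — which follows from the two ambient homomorphisms $End(F) \to End(F/\gamma_n(F))$ and $End(F/\gamma_n(F)) \to End(H)$ being homomorphisms — and then induct on $k$: $\epsilon(\varphi(r)) = \epsilon(\psi_1(\psi_2\cdots\psi_k(r))) = \ti{\psi_1}(\epsilon(\psi_2\cdots\psi_k(r))) = \ti{\psi_1}\cdots\ti{\psi_k}(\epsilon(r)) = \ti\varphi(\epsilon(r))$.

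Having identified the two normal subgroups, the isomorphism $G/\gamma_n(G) \cong H/(U_G)^H$ follows from the third isomorphism theorem, where $U_G = \langle \epsilon(q), \ti\varphi(\epsilon(r)) \mid q \in Q, r \in R, \varphi \in \Phi^* \rangle$ as claimed. I expect no genuine obstacle here; the main thing to be careful about is the direction of composition for endomorphisms and the corresponding functoriality of $\varphi \ms \ti\varphi$ on the monoid $\Phi^*$, which is exactly the step that makes the description of $U_G$ correct (and which, incidentally, is what makes the generating set of $U_G$ infinite in general but finitely describable — relevant to the algorithmic use of this lemma later).
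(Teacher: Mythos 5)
Your proposal is correct and is exactly the argument the paper intends: the lemma is stated there without proof as an immediate consequence of the preceding setup ($G/\gamma_n(G)\cong F/K\gamma_n(F)$, the epimorphism $\epsilon$ with kernel $\gamma_n(F)$, and the homomorphism $End(F)\to End(H)$, $\varphi\mapsto\ti\varphi$). Your writeup simply makes explicit the two points the paper leaves tacit --- the identity $\epsilon\circ\varphi=\ti\varphi\circ\epsilon$ and its extension over the monoid $\Phi^*$ by functoriality --- so there is nothing to object to.
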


Let $V$ be a subgroup of $H$ given by a finite set of generators. Then
standard methods for polycyclically presented groups facilitate an effective
membership test in $V$, the computation of the normal closure of $V$ and 
the determination of a consistent polycyclic presentation of $H/V^H$. We 
refer to \cite{HEO05}, Chapter X, for background. 

Hence it remains to determine a finite generating set for the subgroup $U_G$
of $H$ as described in Lemma \ref{nilpisom} to complete our construction
for $G/\gamma_n(G)$. However, as ascending chains of subgroups in polycyclic
groups terminate, we can use the same method as in Section \ref{abelquot} 
to achieve this aim.

The main deficiency of this method is that it needs to compute a consistent
polycyclic presentation for the quotient $F/\gamma_n(F)$ and this quotient 
can easily be very large, even if the desired quotient $G/\gamma_n(G)$ is 
rather small.

\section{Computing nilpotent quotients II}
\label{nilpquotII}

Let $G$ be defined by a finite $L$-presentation and let $n \in \N$. In 
this section we describe a method to determine a consistent polycyclic 
presentation for $G / \gamma_n(G)$. First, in Section \ref{invar}, we 
consider the special case that $G$ is given by an invariant $L$-presentation. 
Then, in Section \ref{arbit}, we apply the special case method to obtain 
a method for the general case.

\subsection{Invariant finite $L$-presentations}
\label{invar}

Let $G = \langle S \mid Q \mid \Phi \mid R \rangle$ be a group given by a
finite invariant $L$-presentation and let $n \in \N$. We wish to determine 
a consistent nilpotent presentation for $G/\gamma_n(G)$. Note that the case 
$n=1$ is trivial and the case $n=2$ is covered by Section \ref{abelquot}. 
Hence we assume that $n \geq 3$ in the following.

Our overall idea generalises the method for finitely presented groups 
described by Nickel \cite{Nic95}. Thus our basic approach is an induction
on $n$. In the induction step, we assume that we have given a consistent 
weighted nilpotent presentation for $G/\gamma_{n-1}(G)$ and we seek to 
extend this to $G/\gamma_n(G)$. We discuss this step in more detail in 
the following.

First, we introduce some more notation. As before, let $F$ be the free 
group on $S = \{s_1, \ldots, s_m\}$ and let $K = \langle Q \cup 
\bigcup_{\varphi \in \Phi^*} \varphi(R) \rangle^F$ so that $G = F/K$. 
Define $K_n := K \gamma_n(F)$ for $n \in \N$. Then it follows that
\[ G/\gamma_n(G) \cong F / K_n \mbox{ for all } n \in \N. \]

As input for the induction step we use a {\em nilpotent quotient system}
for $F/K_{n-1}$ as described in \cite{Nic95}. We briefly recall the main
features of such a system as follows:
\begin{items}
\item[a)]
a consistent weighted nilpotent presentation $E/T$ defining a group $H$
and having the generators $e_1, \ldots, e_l$, say,
\item[b)]
a homomorphism $\tau : F \ra H$ with kernel $K_{n-1}$ which is defined by
the images $\tau(s_i) = w_i(e_1, \ldots, e_l)$ for $1 \leq i \leq m$, and
\item[c)]
for every $e_j$ with $w(e_j) = 1$ an index $i(j)$ such that the word 
$w_{i(j)}(e_1, \ldots, e_l)$ is of the form $w_{i(j)} = u_{i(j)} e_j$ where 
$u_{i(j)}$ is a word in $e_1, \ldots, e_{j-1}$.
\end{items}

The definition of a weighted nilpotent presentation incorporates that 
every generator of weight greater than 1 in $H$ can be written as a 
word in the generators of weight 1. Thus $H$ is generated by elements
of weight 1. Condition c) implies that for every generator of weight 1
we can compute a preimage in $F$. This yields that the homomorphism
$\tau$ is surjective and it follows that
\[ H \cong F/K_{n-1}.\]

The induction step now proceeds in two stages. First, we determine a 
nilpotent quotient system for $F / [K_{n-1},F]$ by extending the given 
nilpotent quotient system. An effective method for this purpose is 
described in \cite{Nic95}, Section 4. This yields
\begin{items}
\item[a)]
a consistent weighted nilpotent presentation $E^*/T^*$ defining a group
$H^*$ and having the generators $e_1, \ldots, e_l, e_{l+1}, \ldots, e_{l+d}$,
say,
\item[b)]
a homomorphism $\tau^* : F \ra H^*$ with kernel $[K_{n-1},F]$ which is
defined by images of the form $\tau^*(s_i) = w_i(e_1, \ldots, e_l) 
v_i(e_{l+1}, \ldots, e_{l+d})$ for $1 \leq i \leq m$, and
\item[c)]
for every $e_j$ with $w(e_j) = 1$ we have that $w_{i(j)}(e_1, \ldots, e_l) 
v_{i(j)}(e_{l+1}, \ldots, e_{l+d}) = u_{i(j)} e_j$ as above.
\end{items}

Note that $K_{n-1}/[K_{n-1},F]$ is a central subgroup in $F/[K_{n-1},F]$.
It corresponds via $\tau^*$ to the subgroup $M = \langle e_{l+1}, \ldots, 
e_{l+d} \rangle$ of the group $H^*$ so that $H^*$ is a central extension 
of $M$ by $H$.

As a second stage in the induction step of our algorithm, it now remains
to determine a nilpotent quotient system for $F/K_n$ from the given system
for $F/[K_{n-1},F]$. For this purpose we note that 
\[ K_n = K \gamma_n(F) =  K [K,F] [\gamma_{n-1}(F),F] = K [K_{n-1}, F]. \]
Thus it follows that 
\[ F/K_n \cong H^*/\tau^*(K)\]
and it remains to determine a finite generating set for $\tau^*(K)$ as
subgroup of the nilpotent group $H^*$. Once such a finite generating
set is given, we can then use standard methods for computing with 
polycyclically presented groups to determine a consistent weighted
nilpotent presentation for $H^*/\tau^*(K)$ and to modify the nilpotent 
quotient system for $F/[K_{n-1},F]$ to such a system for the quotient 
$F/K_n \cong H^*/\tau^*(K)$.

We investigate $\tau^*(K)$ in more detail in the following. Recall that
$M = \langle e_{l+1}, \ldots, e_{l+d} \rangle$ is an abelian subgroup of
$H^*$. 

\begin{lemma}
$\tau^*(K) \leq M$.
\end{lemma}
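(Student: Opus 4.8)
The plan is to show $\tau^*(K) \leq M$ by tracking how $K$ sits inside $F$ relative to the series we have just constructed. Recall $K = \langle Q \cup \bigcup_{\varphi \in \Phi^*} \varphi(R) \rangle^F$, and that the kernel of $\tau^* : F \ra H^*$ is $[K_{n-1}, F]$ where $K_{n-1} = K\gamma_{n-1}(F)$. Since $M$ corresponds via $\tau^*$ to the central subgroup $K_{n-1}/[K_{n-1},F]$ of $F/[K_{n-1},F]$, proving $\tau^*(K) \leq M$ is equivalent to proving $K \leq K_{n-1}$, i.e. $K \leq K\gamma_{n-1}(F)$. But this last inclusion is trivial.

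So the real content must be that the lemma is stating something about the \emph{generating set} of $\tau^*(K)$ as a subgroup of $M$ — namely that the images $\tau^*(q)$ for $q \in Q$ and $\tau^*(\varphi(r))$ for $\varphi \in \Phi^*$, $r \in R$, all lie in $M$, so that $\tau^*(K)$, being generated by these (conjugation in $H^*$ doesn't escape $M$ once we know $\tau^*(K)$ is normal and contained in the centralizer argument), lands in $M$. First I would observe that $K/[K_{n-1},F] \leq K_{n-1}/[K_{n-1},F]$, and the latter is exactly the subgroup of $H^* = F/[K_{n-1},F]$ identified with $M$ in the preceding paragraph. Hence $\tau^*(K) = K/[K_{n-1},F] \leq M$. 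The key input is the identity $K_n = K[K_{n-1},F]$ established just above, together with the already-stated fact that $H^*$ is a central extension of $M$ by $H$ via $\tau^*$, with $M \cong K_{n-1}/[K_{n-1},F]$.

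The main obstacle, such as it is, is purely bookkeeping: one must be careful that $\tau^*$ is defined on all of $F$ (it is, by part b) of the second quotient system) and that the identification of $M$ with $K_{n-1}/[K_{n-1},F]$ is set up so that elements of $K$ — which are words in the $\varphi(R)$ and $Q$, a priori arbitrary elements of $F$ — do map into $M$ and not merely into some coset. This is guaranteed because $[K_{n-1},F] \leq K$ (as $[K_{n-1},F] = [K[γ_{n-1}(F)],F] \supseteq$ nothing problematic — more simply $[K_{n-1},F]$ contains $[K,F] \leq K$ and, crucially, the whole of $\tau^*$'s kernel lies in $K$ precisely when $[\gamma_{n-1}(F),F] = \gamma_n(F) \leq K\gamma_n(F)$, which holds trivially). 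I would therefore just write: $\tau^*(K)$ is the image of $K$ under $F \twoheadrightarrow F/[K_{n-1},F] = H^*$; since $K \leq K_{n-1}$ and $\tau^*(K_{n-1}) = M$, we get $\tau^*(K) \leq M$.

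One subtlety worth flagging in the write-up: the inclusion $[K_{n-1},F] \leq K$ is \emph{not} automatic — it requires $\gamma_n(F) \leq K\gamma_n(F)$, which is trivial, so in fact $[K_{n-1},F] = [K,F][\gamma_{n-1}(F),F] = [K,F]\gamma_n(F) \leq K\gamma_n(F) = K_n$, and one does not actually need $[K_{n-1},F] \leq K$ at all for the lemma — only $K \leq K_{n-1}$, which needs nothing. So the cleanest proof is genuinely one line, and I would present it as such, citing the central-extension description of $H^*$ established in the paragraph immediately preceding the lemma statement.
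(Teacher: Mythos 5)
Your argument is correct and is essentially the paper's own proof: the paper simply notes that $K \leq \ker(\tau) = K_{n-1}$ and that $\tau^*$ extends $\tau$, so $\tau^*(K)$ lands in $\tau^*(K_{n-1}) = M$, exactly your one-line version. The lengthy detours about $[K_{n-1},F] \leq K$ (and the slip writing $\tau^*(K) = K/[K_{n-1},F]$ rather than $K[K_{n-1},F]/[K_{n-1},F]$) are correctly identified by you as unnecessary and should be cut.
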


\begin{proof}
This follows directly, as $K \leq ker(\tau)$ and $\tau^*$ extends
$\tau$.
\end{proof}

Note that $M$ is a finitely generated abelian group by construction.
It now remains to determine a finite generating set for $\tau^*(K)$ as
a subgroup of $M$.

\begin{lemma}
Every endomorphism $\varphi \in \Phi^*$ induces an endomorphism 
$\ol{\varphi} \in End(M)$ via $\tau^*$ and we obtain a homomorphism
$\Phi^* \ra End(M) : \varphi \ms \ol{\varphi}$.
\end{lemma}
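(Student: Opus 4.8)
The plan is to exhibit the induced endomorphism directly using the invariance hypothesis on the $L$-presentation, and then check functoriality. First I would fix $\varphi \in \Phi^*$ and recall that, since the $L$-presentation is invariant, $\varphi(K) \subseteq K$; in particular $\varphi$ restricts to an endomorphism of $K$. Moreover $\varphi(\gamma_{n-1}(F)) \subseteq \gamma_{n-1}(F)$ and $\varphi([K_{n-1},F]) \subseteq [\varphi(K_{n-1}),\varphi(F)] \subseteq [K_{n-1},F]$, so $\varphi$ descends to an endomorphism of $F/[K_{n-1},F]$ that carries the central subgroup $K_{n-1}/[K_{n-1},F]$ into itself. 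Transporting along the isomorphism $\tau^*$ (which has kernel exactly $[K_{n-1},F]$), we obtain an endomorphism of $H^*$ that maps $M = \tau^*(K_{n-1}/[K_{n-1},F])$ into $M$. Its restriction to $M$ is the desired $\ol{\varphi} \in End(M)$.

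Concretely, I would define $\ol{\varphi}$ on a generator $e_{l+k} = \tau^*(x_k)$ (with $x_k \in K_{n-1}$) by $\ol{\varphi}(e_{l+k}) := \tau^*(\varphi(x_k))$, and the first task is to check this is well defined, i.e.\ independent of the chosen preimage $x_k$: if $\tau^*(x_k) = \tau^*(x_k')$ then $x_k (x_k')^{-1} \in [K_{n-1},F]$, hence $\varphi(x_k)\varphi(x_k')^{-1} \in [K_{n-1},F]$ by the containment above, so $\tau^*(\varphi(x_k)) = \tau^*(\varphi(x_k'))$. That $\ol{\varphi}$ so defined is a homomorphism $M \to M$ is immediate since $\tau^* \circ \varphi$ is a homomorphism on $K_{n-1}$ and $M$ is abelian; that its image lands in $M$ follows from $\varphi(K_{n-1}) \subseteq K_{n-1}$.

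For the second assertion I would verify that $\varphi \ms \ol{\varphi}$ respects composition and sends $id \in \Phi^*$ to $id_M$. Both reduce to the formula $\ol{\varphi}(\tau^*(x)) = \tau^*(\varphi(x))$ for $x \in K_{n-1}$: for $\varphi, \psi \in \Phi^*$ and $x \in K_{n-1}$ we get $\ol{\varphi}(\ol{\psi}(\tau^*(x))) = \ol{\varphi}(\tau^*(\psi(x))) = \tau^*(\varphi(\psi(x))) = \overline{\varphi\psi}(\tau^*(x))$, using at the middle step that $\psi(x) \in K_{n-1}$ so the defining formula applies again. Hence the map $\Phi^* \to End(M)$ is a monoid homomorphism.

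The only subtle point — and the main thing to get right — is bookkeeping about which group $\varphi$ is being applied to: $\varphi$ is a priori an endomorphism of $F$, and everything hinges on the three invariance facts $\varphi(K) \subseteq K$, $\varphi(\gamma_{n-1}(F)) \subseteq \gamma_{n-1}(F)$, and consequently $\varphi(K_{n-1}) \subseteq K_{n-1}$ and $\varphi([K_{n-1},F]) \subseteq [K_{n-1},F]$. The first is exactly the invariance hypothesis (using, for $\varphi$ a product of elements of $\Phi$, that invariance for each factor gives invariance for the product); the others are standard properties of the lower central series under endomorphisms. Once these containments are in place, well-definedness and functoriality are routine, so I would present those containments carefully and keep the rest brief.
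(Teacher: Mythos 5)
Your proof is correct and follows essentially the same route as the paper's: invariance of the $L$-presentation gives $\varphi(K)\subseteq K$, the terms $\gamma_i(F)$ are invariant under any endomorphism, hence $K_{n-1}=K\gamma_{n-1}(F)$ and $[K_{n-1},F]$ are $\varphi$-invariant, and the induced map on $K_{n-1}/[K_{n-1},F]$ is transported via $\tau^*$ to an endomorphism of $M$. The paper leaves the well-definedness and monoid-homomorphism verifications implicit; your spelling them out only adds detail to the same argument.
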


\begin{proof}
Let $\varphi \in \Phi^*$. As the given $L$-presentation is invariant, it 
follows that $\varphi(K) \subseteq K$ holds. Clearly also $\gamma_i(F)$ is 
invariant under $\varphi$ for every $i \in \N$. Thus we obtain that 
$K_{n-1} = K \gamma_{n-1}(F)$ and also $[K_{n-1},F]$ are invariant under 
$\varphi$. Thus $\varphi$ induces an endomorphism of $K_{n-1}/[K_{n-1},F]$ 
and hence, via $\tau^*$, also of $M$.
\end{proof}

This implies the following.

\begin{lemma}
\label{taugens}
$\tau^*(K) = \langle \tau^*(q), \ol{\varphi}(\tau^*(r)) \mid q \in Q, 
r \in R, \varphi \in \Phi^* \rangle$.
\end{lemma}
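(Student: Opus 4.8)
The plan is to recall that $K = \langle Q \cup \bigcup_{\varphi \in \Phi^*} \varphi(R) \rangle^F$ is the normal closure in $F$ of the set $Q \cup \bigcup_{\varphi \in \Phi^*} \varphi(R)$, and to push this description through $\tau^*$. Since $\tau^* : F \ra H^*$ is a surjective homomorphism, it maps the normal closure of a set in $F$ onto the normal closure in $H^*$ of the image of that set. Hence
\[ \tau^*(K) = \langle \tau^*(q), \tau^*(\varphi(r)) \mid q \in Q, r \in R, \varphi \in \Phi^* \rangle^{H^*}. \]
First I would argue that the normal closure on the right can be replaced by an ordinary subgroup generated by the same elements: by the previous lemma $\tau^*(K) \leq M$, and $M$ is abelian, so once we know the generators lie in $M$ the subgroup they generate is already normal in $H^*$ (conjugation by any element of $H^*$ fixes $M$ pointwise, since $M$ corresponds to the central subgroup $K_{n-1}/[K_{n-1},F]$ of $F/[K_{n-1},F]$). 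This removes the normal-closure decoration.

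The second step is to rewrite $\tau^*(\varphi(r))$ as $\ol{\varphi}(\tau^*(r))$. This is exactly the compatibility encoded in the preceding lemma: for $\varphi \in \Phi^*$ the induced map $\ol{\varphi} \in End(M)$ is defined precisely so that the square relating $\varphi$ on $K_{n-1}/[K_{n-1},F]$ and $\ol{\varphi}$ on $M$ commutes with the isomorphism induced by $\tau^*$. Concretely, for $r \in R$ we have $\varphi(r) \in \varphi(R) \subseteq K$ and $r \in R \subseteq K$, both mapping into $M$ under $\tau^*$, and $\tau^* \circ \varphi = \ol{\varphi} \circ \tau^*$ on $K$ by construction of $\ol{\varphi}$. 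Substituting gives the claimed generating set. I would also note that $\tau^*(q) = \ol{\mathrm{id}}(\tau^*(q))$ so the two families are of the same type, but it is cleaner to leave the $q$-terms as written.

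The main obstacle, such as it is, is bookkeeping rather than mathematics: one must be careful that $\tau^*$ (not merely $\tau$) is what sends $\varphi(R)$ into $M$, and that the identity $\tau^* \circ \varphi = \ol{\varphi} \circ \tau^*$ is only asserted on the relevant invariant subgroup $K$ — it need not hold on all of $F$, since $\ol{\varphi}$ is only defined on $M$. Once these domains are kept straight, each step is an immediate consequence of (i) surjectivity of $\tau^*$, (ii) $\tau^*(K) \leq M$ with $M$ central, and (iii) the definition of $\ol{\varphi}$, all of which are available from the preceding lemmas. So the proof is essentially a two-line chain of equalities: expand $K$ as a normal closure, apply $\tau^*$, drop the normal-closure decoration using centrality of $M$, and pull $\varphi$ through $\tau^*$ to turn it into $\ol{\varphi}$.
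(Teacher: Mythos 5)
Your proof is correct and follows the same route the paper takes: the paper's own proof is the one-line remark that the claim ``follows directly by translating the defining generating set of $K$ to generators of $\tau^*(K)\leq M$'', and your argument just spells out the three ingredients implicit in that translation (surjectivity of $\tau^*$ carrying normal closures to normal closures, centrality of $M$ in $H^*$ making the normal-closure decoration superfluous, and the defining relation $\tau^*\circ\varphi=\ol{\varphi}\circ\tau^*$ on $K$). Your attention to the domain on which $\ol{\varphi}$ is defined is exactly the right bookkeeping point.
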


\begin{proof}
This follows directly by translating the defining generating set of $K$
to generators of $\tau^*(K) \leq M$.
\end{proof}

As $M$ is finitely generated abelian, it satisfies the ascending chain
condition. Thus a finite generating set for $\tau^*(K)$ can be computed 
from the description given in Lemma \ref{taugens} using a similar approach
to the algorithm `FiniteGeneratingSet' of Section \ref{abelquot}. 

We summarise our resulting algorithm for the induction step as follows. Let 
${\mathcal Q}(F/L)$ denote the nilpotent quotient system for a quotient 
$F/L$ of $F$.

\begin{figure}[htb]
\begin{tabbing}
Ind\=uct\=ion\=Ste\=p( ${\mathcal Q}(F/K_{n-1})$ ) \\
\> Compute a nilpotent quotient system ${\mathcal Q}(F/[K_{n-1},F])$ 
   (see \cite{Nic95}). \\
\> Induce every $\varphi \in \Phi$ to $\ol{\varphi} \in End(M)$. \\
\> Induce every $g \in Q \cup R$ to $\tau^*(g) \in M$. \\
\> Determine a finite generating set for $\tau^*(K)$ using 
   Lemma \ref{taugens} and `FiniteGeneratingSet'. \\
\> Determine a consistent weighted nilpotent presentation for 
   $H^*/\tau^*(K)$. \\
\> Return ${\mathcal Q}(F/K_n)$ as modification of 
   ${\mathcal Q}(F/[K_{n-1},F])$.
\end{tabbing}
\end{figure}

\subsection{Arbitrary finite $L$-presentations}
\label{arbit}

Now let $G = \langle S \mid Q \mid \Phi \mid R \rangle$ be a group given by 
an arbitrary finite $L$-presentation and let $n \in \N$. We wish to determine 
a consistent polycyclic presentation for $G/\gamma_n(G)$. As above, let $F$ 
be the free group on $S$ and denote $K = \langle Q \cup \bigcup_{\varphi \in 
\Phi^*} \varphi(R) \rangle^F$. Our method proceeds in the following 3 steps.
\medskip

Step 1: We determine an invariant finite $L$-presentation $\langle S \mid 
\ol{Q} \mid \Phi \mid R \rangle$ defining a group $\ol{G}$, say, so that 
its kernel $\ol{K} = \langle \ol{Q} \cup \bigcup_{\varphi \in \Phi^*} 
\varphi(R) \rangle^F$ satisfies $\ol{K} \subseteq K$.
\medskip

Step 2: We determine the nilpotent quotient of the larger group $\ol{G}$ as
$H := \ol{G}/\gamma_n(\ol{G})$ using the method of Section \ref{invar}. 
\medskip

Step 3: We determine the finite set $U$ of images of $Q \setminus \ol{Q}$ in  
$H$ and obtain $G / \gamma_n(G) \cong H / \langle U \rangle^H$ using standard
methods for polycyclically presented groups.
\medskip

Step 1 requires some further explanation. First note that we could always
choose $\ol{Q} = \emptyset$ and thus obtain a fully automatic algorithm.
However, the effectivity of the above method relies critically on finding
an $L$-presentation in Step 1 that yields a possibly ``small'' subgroup 
$\langle U \rangle^H$. (``Small'' means here that the difference in the
numbers of generators of the polycyclic presentation for $H$ and its 
induced presentation for $H / \langle U \rangle^H$ is small.) Thus it may
be of interest to supply a ``nice'' $L$-presentation for Step 1 by other 
means. However, there is no general algorithm for finding such a ``nice''
$L$-presentation available at current.

\section{Sample applications and runtimes}

The algorithm described in this paper has been implemented in the \GAP 
package {\sf NQL} \cite{NQL}. In this section we outline runtimes for some 
sample applications of this algorithm and thus exhibit the scope and 
the range of possible applications of our algorithm. 

All timings displayed below have been obtained on an Intel Pentium 4 
computer with clock speed 2.80 GHz by applying the {\sf NQL} algorithm with 
a time limit of two hours. Then the computation has been stopped and 
the resulting nilpotent quotient together with the total time used to 
obtain this quotient has been listed.

\subsection{Some well-known groups}

There are various interesting examples of finitely $L$-presented, but not 
finitely presented groups known. We list some of them in the following;
the Fabrykowski-Gupta group and the Gupta-Sidki group are treated in detail 
in the next sections.

\begin{items}
\bulit 
$G$: the Grigorchuk group with its $L$-presentation in \cite{Lys85}.
\bulit
$\ti{G}$: the Grigorchuk supergroup with its $L$-presentation in 
\cite{Bar03}, Theorem 4.6.
\bulit
$BSV$: the Brunner-Sidki-Vieira group \cite{BSV99} with its 
$L$-presentation in \cite{Bar03}, Theorem 4.4. 
\bulit
$\Delta$: the Basilica group \cite{GZu02} with its $L$-presentation 
in \cite{BVi05}.
\bulit
$B$: the Baumslag group \cite{Bau71b} with its $L$-presentation in 
\cite{Bar03}, Theorem 4.2. 
\bulit
$L$: the Lamplighter group with its $L$-presentation in \cite{Bar03}, 
Theorem 4.1. 
\end{items}

Table 1 briefly describes the application of our algorithm to these groups. 
It lists whether the considered groups have ascending or non-invariant 
$L$-presentations, it briefly describes the obtained nilpotent quotients 
by their classes and the number of generators in their nilpotent 
presentations and it exhibits the runtimes used to determine the nilpotent
quotients. 

Table 1 shows that our algorithm has a significantly better performance on 
ascending $L$-presentations than on non-invariant ones. In the case of a 
non-invariant $L$-presentation, the column `gens' of Table 1 lists in 
brackets the number of generators of the invariant $L$-presentation used 
in Step 1 of the method in Section \ref{arbit}.

\begin{table}[htb]
\begin{center}
\label{table1}
\begin{tabular}{|c|c|c|c|c|}
\hline
Group      & prop  & class & gens & time (h:min) \\
\hline
$G$        & asc   & 80    & 130  & 1:53  \\
$\ti{G}$   & asc   & 47    & 127  & 1:56  \\
$BSV$      & asc   & 34    & 171  & 1:27  \\
$\Delta$   & asc   & 39    & 220  & 1:47  \\
\hline
$B$        & non-inv & 11  & 12 (423) & 0:21  \\
$L$        & non-inv & 9   & 10 (253) & 0:04  \\
\hline
\end{tabular}
\caption{Some well-known groups}
\end{center}
\end{table}

In the remainder of this subsection, we outline and discuss the lower 
central series quotients $\gamma_i(*) / \gamma_{i+1}(*)$ for the groups in 
Table 1 in more detail. To shorten notation, we outline lists in
collected form; that is, if an entry $a$ in a list appears in $n$ consecutive 
places, then we write $a^{[n]}$ instead of $n$ times $a$. 

The lower central series quotients of the Grigorchuk group $G$ are known 
by theoretical results of Rozhkov \cite{Roz96}, see also \cite{Gri99}. Our
computations confirm the following theorem.

\begin{theorem} {\rm (See \cite{Roz96})}
The Grigorchuk group $G$ satisfies that
\[ rk(\gamma_i(G)/\gamma_{i+1}(G)) = 
   \left\{ \begin{array}{cl} 
  3 \mbox{ or } 2 & \mbox{ if } i=1 \mbox{ or } 2 \mbox{ resp. } \\
  2 & \mbox{ if } i \in \{2\cdot2^k+1, \ldots, 3\cdot 2^k \} \\
  1 & \mbox{ if } i \in \{3\cdot2^k+1, \ldots, 4\cdot 2^k \} \\
   \end{array} \right\} \mbox{ with } k \in \N_0.\]
\end{theorem}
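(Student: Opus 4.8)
The plan is to reconstruct Rozhkov's computation of the lower central series factors of the Grigorchuk group $G$ by exploiting the self-similar structure coming from the invariant $L$-presentation of Example~\ref{examGr}. First I would set up the branch structure: writing $G$ as a subgroup of the automorphism group of the binary rooted tree, one has the embedding $\psi\colon \mathrm{St}_G(1) \to G \times G$ onto a subgroup of finite index, and $G$ is a regular branch group over its subgroup $K = \langle [a,b] \rangle^G$ (equivalently $\langle (ab)^2 \rangle^G$), so that $K \times K \leq \psi(K)$. The key point is that this recursive self-similarity lets one relate $\gamma_i(G)$ to $\gamma_j(G)$ for suitable smaller $j$: one shows that $\psi$ carries $\gamma_i(G) \cap \mathrm{St}_G(1)$ into a subgroup sandwiched between $\gamma_{2i-c}(G) \times \gamma_{2i-c}(G)$ and $\gamma_{\lceil i/2\rceil + c'}(G) \times \gamma_{\lceil i/2 \rceil + c'}(G)$ for small constants, which is exactly the mechanism producing the doubling pattern $2\cdot 2^k+1,\dots,3\cdot 2^k$ and $3\cdot 2^k+1,\dots,4\cdot 2^k$ in the statement.

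The concrete steps I would carry out are: (1) compute the bottom of the lower central series by hand (or cite the well-known values $\gamma_1/\gamma_2 \cong (\Z/2)^3$, $\gamma_2/\gamma_3 \cong (\Z/2)^2$, $\gamma_3/\gamma_4 \cong (\Z/2)^2$, etc.) to anchor the induction; (2) establish the precise sandwich inclusions for the images under $\psi$ of the lower central terms, using the commutator identities $[\psi(g),\psi(h)]$ and the fact that $a$ acts by swapping the two subtrees so that commutators with $a$ convert a pair $(u,v)$ into something involving $uv$; (3) feed these inclusions into an inductive argument on $k$: assuming the rank pattern is known up to index $4\cdot 2^{k-1}$, deduce it on the range up to $4\cdot 2^k$, where the rank-$2$ plateau and the rank-$1$ plateau each have length $2^k$; (4) separately verify that the quotients are elementary abelian of the stated ranks rather than merely bounded, which uses that $G$ is a $2$-group and a dimension count in the associated graded Lie algebra over $\F_2$. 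Throughout, the $L$-presentation is what makes the relevant subgroups (such as $K$ and the rigid stabilizers) explicitly computable, and it is also what our algorithm uses to confirm the pattern numerically up to class $80$.

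The main obstacle will be step (2)–(3): getting the constants in the sandwich inclusions sharp enough that the induction pins down the factor ranks exactly, rather than leaving an ambiguity of $\pm 1$ in the rank or $\pm O(1)$ in the index ranges. In particular one must control how $\gamma_i(G)$ interacts with the stabilizer $\mathrm{St}_G(1)$ and with $K$ — the issue is that $\psi(\gamma_i(G) \cap \mathrm{St}_G(1))$ is not literally a product $\gamma_j(G)\times\gamma_j(G)$ but only trapped between two such products, and one needs the branch-subgroup structure ($K\times K \leq \psi(K) \leq K' \times K'$ with $K'$ of controlled index) to close the gap. Since the full details are carried out in \cite{Roz96}, here I would present the branch-group set-up and the sandwich estimates, indicate how the doubling recursion yields the stated closed form by induction on $k$, and then note that our nilpotent quotient computation reproduces $rk(\gamma_i(G)/\gamma_{i+1}(G))$ for all $i \leq 80$, in agreement with the theorem.
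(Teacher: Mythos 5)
You should first note that the paper does not prove this theorem at all: it is stated as a known result of Rozhkov, imported by citation from \cite{Roz96}, and the paper's only contribution here is that the nilpotent quotient algorithm reproduces the stated ranks for $i \leq 80$, which is consistency checking rather than proof. So there is no in-paper argument for your sketch to be compared against; the honest way to present this statement in the context of the paper is exactly as a quoted theorem plus computational confirmation.

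Your proposal describes the right mechanism behind the actual proof --- the embedding $\psi\colon \mathrm{St}_G(1)\to G\times G$, the regular branch structure over $K=\langle (ab)^2\rangle^G$ with $K\times K\leq\psi(K)$, and a doubling recursion that produces the plateaus of lengths $2^k$ --- and this is indeed the strategy of Rozhkov and of the later Lie-algebra treatments of Bartholdi and Grigorchuk. But as written it is a proof plan, not a proof: the decisive content is the pair of sandwich inclusions in your step (2) with \emph{sharp} constants, and you state these only up to unspecified $c$, $c'$ while explicitly acknowledging that closing the $\pm O(1)$ ambiguity is the main obstacle. Without those exact inclusions the induction in step (3) cannot pin down where the rank drops from $2$ to $1$, which is the whole point of the theorem; similarly, step (4) (elementary abelianness and the exact ranks, not just bounds) is asserted rather than argued. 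One small caution on step (2) as well: the inclusion relating $\psi(\gamma_i(G)\cap\mathrm{St}_G(1))$ to products of lower central terms is most naturally phrased via $\gamma_n(K)$ rather than $\gamma_n(G)$, and conflating the two is precisely where the constants tend to go wrong. So either carry out the sandwich estimates in full, or do what the paper does and attribute the result to \cite{Roz96}, presenting the computation to class $80$ as corroboration only.
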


For the Grigorchuk supergroup $\ti{G}$ we computed 
$\gamma_i(\ti{G})/ \gamma_{i+1}(\ti{G})$ for $1 \leq i \leq 64$. 
The resulting groups are elementary abelian 2-groups with ranks
\[ 4, 3^{[2]}, 2, 3^{[2]}, 2^{[2]}, 3^{[4]}, 2^{[4]}, 3^{[8]}, 2^{[8]}, 
   3^{[16]}, 2^{[16]}. \]
This induces the following conjecture.

\begin{conjecture} The Grigorchuk supergroup $\ti{G}$ satisfies that
\[ rk(\gamma_i(\ti{G})/\gamma_{i+1}(\ti{G})) = 
   \left\{ \begin{array}{cl} 
   3 & \mbox{ if } i \in \{ 2 \cdot 2^{k}+1, \ldots, 3 \cdot 2^k \} \\
   2 & \mbox{ if } i \in \{ 3 \cdot 2^{k}+1, \ldots, 4 \cdot 2^k \} \\
   \end{array} \right\} \mbox{ with } k \in \N_0.\]
\end{conjecture}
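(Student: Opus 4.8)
\emph{Sketch of a possible approach.} The plan is to pass from the group $\ti G$ to its associated graded Lie algebra and to exploit the self-similar branch structure of $\ti G$, following the strategy used by Rozhkov \cite{Roz96} for the Grigorchuk group $G$ (see also \cite{Gri99} and the refinement of that analysis through the associated Lie algebra). First I would check, from the $L$-presentation of $\ti G$ in \cite{Bar03}, that every factor $\gamma_i(\ti G)/\gamma_{i+1}(\ti G)$ is an elementary abelian $2$-group; then $\mathcal L := \bigoplus_{i\ge 1}\gamma_i(\ti G)/\gamma_{i+1}(\ti G)$ is a graded restricted Lie algebra over $\F_2$ and $rk(\gamma_i(\ti G)/\gamma_{i+1}(\ti G)) = \dim_{\F_2}\mathcal L_i$. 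In these terms the conjecture says that the Hilbert series $h(t)=\sum_{i\ge1}(\dim_{\F_2}\mathcal L_i)\,t^i$ equals $4t+3t^2+\sum_{k\ge0}\bigl(3(t^{2\cdot2^k+1}+\cdots+t^{3\cdot2^k})+2(t^{3\cdot2^k+1}+\cdots+t^{4\cdot2^k})\bigr)$.

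The core step is a degree-doubling self-similarity of $\mathcal L$. The group $\ti G$ acts on the binary rooted tree and is a branch group: the first-level stabiliser $\ti G_1$ carries a map $\psi\colon\ti G_1\to\ti G\times\ti G$, and a suitable finite-index subgroup $L$ satisfies $L\times L\subseteq\psi(L\cap\ti G_1)$. Combining this with the explicit Lysenok-type relators provided by \cite{Bar03}, I would prove two-sided estimates of the form $\psi(\gamma_{2i}(\ti G)\cap\ti G_1)\supseteq\gamma_i(\ti G)\times\gamma_i(\ti G)$ and $\psi(\gamma_j(\ti G)\cap\ti G_1)\subseteq\gamma_{\lceil j/2\rceil}(\ti G)\times\gamma_{\lceil j/2\rceil}(\ti G)$, up to a bounded shift in the index. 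These translate into a graded map $\mathcal L\to\mathcal L\oplus\mathcal L$ that doubles degrees, is injective above some bounded degree, and whose image has bounded codimension in each degree; this in turn yields a functional equation $h(t)=p(t)+(1+t)\,h(t^2)$, where $p$ is a low-degree polynomial recording the behaviour in small degrees (and the ``extra'' Lie elements coming from the larger generating set of $\ti G$), and the factor $1+t$ reflects that both $\mathcal L_{2i}$ and $\mathcal L_{2i+1}$ acquire a copy of $\mathcal L_i$.

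Given the functional equation, the conjecture follows by pinning down $p(t)$ and inducting on $k$. The $L$-presentation (equivalently, the {\sf NQL} data through degree $64$) supplies the base cases $\dim_{\F_2}\mathcal L_1=4$, $\dim_{\F_2}\mathcal L_2=\dim_{\F_2}\mathcal L_3=3$, $\dim_{\F_2}\mathcal L_4=2$ and confirms that there are no further corrections, so the block of dimensions on degrees $\{2\cdot2^0+1,\ldots,4\cdot2^0\}=\{3,4\}$ is $(3,2)$. The functional equation then shows that the block on $\{2\cdot2^{k}+1,\ldots,4\cdot2^{k}\}$ is the ``doubling'' of the block on $\{2\cdot2^{k-1}+1,\ldots,4\cdot2^{k-1}\}$: by induction the latter is $2^{k-1}$ copies of $3$ followed by $2^{k-1}$ copies of $2$, and doubling gives $2^{k}$ copies of $3$ followed by $2^{k}$ copies of $2$, i.e.\ rank $3$ on $\{2\cdot2^k+1,\ldots,3\cdot2^k\}$ and rank $2$ on $\{3\cdot2^k+1,\ldots,4\cdot2^k\}$, which is exactly the assertion. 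It is instructive to compare with the Theorem above: for $G$ the same mechanism produces the block $(2,1)$ in place of $(3,2)$, the difference being precisely the ``extra'' generators of $\ti G$.

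The main obstacle is the self-similarity step itself: one has to establish the inclusions between lower central series terms with the correct constant index shift, and especially to control the boundary — i.e.\ to show that the larger generating set of $\ti G$ contributes only boundedly many new Lie-algebra elements, so that the repeating block is $(3,2)$ and not some other finite word, and that these contributions do not perturb the recursion in high degree. For $G$ this is the substance of Rozhkov's work \cite{Roz96}; for $\ti G$ it would have to be redone with the presentation of \cite{Bar03}. A cleaner route, if it exists, would be to realise $\mathcal L(\ti G)$ explicitly as an extension, by a graded piece of bounded dimension, of a Lie algebra manufactured from $\mathcal L(G)$ via the self-similar structure, thereby deducing the conjecture from the already-established Theorem; whether such a clean relationship holds is itself the heart of the matter.
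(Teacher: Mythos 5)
The first thing to note is that the paper does not prove this statement at all: it is explicitly labelled a conjecture, and the only evidence offered is the output of the {\sf NQL} nilpotent quotient algorithm, which computes the ranks of $\gamma_i(\ti G)/\gamma_{i+1}(\ti G)$ for $1\le i\le 64$ and observes that they fit the stated pattern. So there is no proof in the paper to compare yours against; any complete argument would go beyond the source.

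Your proposal, however, is also not a proof, and you say so yourself. The entire weight of the argument rests on the self-similarity inclusions $\psi(\gamma_{2i}(\ti G)\cap \ti G_1)\supseteq \gamma_i(\ti G)\times\gamma_i(\ti G)$ and the reverse containment ``up to a bounded shift,'' together with the resulting functional equation for the Hilbert series; none of this is established, and for $\ti G$ (as opposed to $G$, where Rozhkov did the work) it is precisely the open content of the conjecture. There is also a concrete inconsistency in the one piece of the argument you do write down: a functional equation of the form $h(t)=p(t)+(1+t)h(t^2)$ with $p$ of bounded degree forces $d_{2i}=d_{2i+1}=d_i$ for all large $i$, which produces blocks of $3$'s on $\{2\cdot2^k+2,\dots,3\cdot2^k+1\}$ --- shifted by one from the conjectured blocks $\{2\cdot2^k+1,\dots,3\cdot2^k\}$. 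Checking against the computed data ($d_8=2$ but $d_9=3$; $d_{12}=3$ but $d_{13}=2$), the corrections to your equation would have to occur at the degrees $2\cdot2^k+1$ and $3\cdot2^k+1$ for every $k$, so $p$ cannot be a polynomial; the doubling map, if it exists, must send degree $i$ to degrees $2i-1$ and $2i$ (equivalently $t\,h(t)=(1+t)h(t^2)+(\text{bounded})$), not to $2i$ and $2i+1$. So even as a programme the recursion needs to be re-indexed before the induction on $k$ can close, and the base-case verification through degree $64$ would still only be computational evidence, exactly as in the paper.
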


For the Brunner-Sidki-Vieira group $BSV$ the Jennings series is completely 
determined in \cite{Bar04}. But there have been only the first 4 quotients 
of its lower central series known so far. We computed $\gamma_i(BSV)/ 
\gamma_{i+1}(BSV)$ for $1 \leq i \leq 43$ and obtained the following 
abelian invariants:
\begin{eqnarray*}
&\mbox{}&    (  0,  0 ), ( 0 ), (  8 ), \\
&\mbox{}&    (  8 ), 
             (  4,  8 ), 
             (  2,  8 ), \\
&\mbox{}&    (  2,  2,  8 )^{[2]}, 
             (  2,  2,  4,  8 )^{[2]}, 
             (  2,  2,  2,  8 )^{[2]}, \\
&\mbox{}&    (  2,  2,  2,  2,  8 )^{[4]}, 
             (  2,  2,  2,  2,  4,  8 )^{[4]}, 
             (  2,  2,  2,  2,  2,  8 )^{[4]}, \\
&\mbox{}&    (  2,  2,  2,  2,  2,  2,  8 )^{[8]},
             (  2,  2,  2,  2,  2,  2,  4,  8 )^{[8]},
             (  2,  2,  2,  2,  2,  2,  2,  8 )^{[3]} 
\end{eqnarray*}
This induces the following conjecture, where $I(*)$ denotes the abelian 
invariants of a group.

\begin{conjecture} The Brunner-Sidki-Vieira group $BSV$ satisfies that
\[ I( \gamma_i(BSV)/\gamma_{i+1}(BSV) ) = 
   \left\{ \begin{array}{ll} 
   (2^{[2k]},8)    & \mbox{ if } i \in \{ 3 \cdot 2^k+1, \ldots, 4 \cdot 2^k \}\\
   (2^{[2k]},4,8)  & \mbox{ if } i \in \{ 4 \cdot 2^k+1, \ldots, 5 \cdot 2^k \}\\
   (2^{[2k+1]},8)  & \mbox{ if } i \in \{ 5 \cdot 2^k+1, \ldots, 6 \cdot 2^k \}\\
   \end{array} \right\} \mbox{ with } k \in \N_0. \]
\end{conjecture}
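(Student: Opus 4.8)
The plan is to prove the conjecture not by extending the finite computation (which only anchors finitely many cases) but by exploiting the self-similar structure of $BSV$ to set up a degree-doubling recursion on the integral associated graded Lie ring $L = \bigoplus_{i \geq 1} \gamma_i(BSV)/\gamma_{i+1}(BSV)$. The dyadic index ranges $\{c \cdot 2^k + 1, \ldots, (c+1)\cdot 2^k\}$ appearing in the statement are precisely the signature one expects from a map that roughly doubles commutator weight, so the aim is to realise such a map and read off its effect on the abelian invariants.

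First I would extract the self-similar embedding from the $L$-presentation in \cite{Bar03}: $BSV$ acts on the binary rooted tree, its first-level stabiliser $\mathrm{Stab}_1$ has index $2$, and restriction to the two subtrees gives an injective homomorphism $\psi \colon \mathrm{Stab}_1 \ra BSV \times BSV$. The defining endomorphism of the $L$-presentation is the algebraic shadow of $\psi$, so the $L$-presentation provides explicit control over $\psi$ on generators and hence on iterated commutators.

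Next I would show that $\psi$ is degree-doubling on $L$: the image of a weight-$w$ commutator in $\mathrm{Stab}_1$ has components of total weight $2w$, up to a bounded correction. Since $\mathrm{Stab}_1$ has finite index, $\gamma_i(\mathrm{Stab}_1)$ and $\gamma_i(BSV)$ agree up to a bounded shift in high degree, so for large $i$ this produces isomorphisms, or short exact sequences with bounded error terms, relating the factor at index $\approx 2i$ to the factor at index $i$. Translating to abelian invariants yields the recursion carrying the block at level $k$ to the block at level $k+1$: the elementary-abelian part roughly doubles in rank ($2^{[2k]} \ra 2^{[2k+2]}$), while the distinguished $\Z/8$ summand, and in the middle block the additional $\Z/4$, is reproduced rather than scaled. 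The computed invariants for $1 \leq i \leq 43$ then supply the base cases $k = 0, 1, 2, 3$ in all three block types and pin down the boundary behaviour where consecutive dyadic blocks meet.

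The hard part will be controlling the integral torsion — the orders $4$ and $8$ — rather than merely the $\F_2$-ranks. The Jennings series determined in \cite{Bar04} fixes the mod-$2$ dimension in each degree, hence the number of invariant factors, but it cannot distinguish a $\Z/2$ from a $\Z/4$ or $\Z/8$. Recovering the integral invariants means tracking the extension data in the central extensions $H^* \ra H$ that drive the induction step of Section \ref{invar}: one must show that in each dyadic block exactly one class survives to order $8$, with precisely one further class of order $4$ in the middle block, and that $\psi$ preserves this exact $2$-power order. Proving this stability of the precise $2$-power orders under the doubling map, uniformly in $k$, is the crux of the argument.
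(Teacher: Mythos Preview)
The paper does not prove this statement: it is explicitly labelled a \emph{Conjecture}, formulated on the basis of the computed abelian invariants of $\gamma_i(BSV)/\gamma_{i+1}(BSV)$ for $1 \le i \le 43$ and nothing more. There is therefore no proof in the paper to compare against; your proposal is an attempt to prove something the authors left open.

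As a strategy your outline is plausible in broad strokes --- the dyadic block structure is indeed the fingerprint of a doubling substitution coming from the branching map, and analogous arguments have been carried through for the Grigorchuk group and for $GS_3$ (the references to \cite{Roz96} and \cite{Bar05} in the paper). But what you have written is a plan, not a proof. The two load-bearing steps are only asserted: first, that $\psi$ induces, in sufficiently high degree, genuine isomorphisms (or controlled short exact sequences) between $\gamma_i/\gamma_{i+1}$ and $\gamma_{2i}/\gamma_{2i+1}$ of the appropriate shape; second, that the precise $2$-power orders $4$ and $8$ are preserved under this transfer. You yourself flag the second point as ``the crux of the argument'' and then stop. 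The Jennings-series input from \cite{Bar04} gives you the $\F_2$-dimensions, hence the number of invariant factors, but as you note it cannot separate $\Z/2$, $\Z/4$, and $\Z/8$; bridging that gap requires an actual computation of the relevant extension classes or of the integral Lie ring structure, and none is supplied. Until those two steps are carried out with explicit control of the error terms at the block boundaries, the conjecture remains open and your text is a proof sketch rather than a proof.
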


For the Basilica group $\Delta$ we computed 
$\gamma_i(\Delta)/ \gamma_{i+1}(\Delta)$ for $1 \leq i \leq 48$ 
and obtained the following abelian invariants:
\begin{eqnarray*}
&\mbox{}&    (  0,  0 ), (  0 ), (  4 )^{[2]}, (  4,  4 ), (2,4) \\
&\mbox{}&    (  2,  2,  4 )^{[2]}, 
             (  2,  2,  2,  4 ), 
             (  2,  2,  2,  2,  4 )^{[2]},
             (  2,  2,  2,  4 ), \\
&\mbox{}&    (  2,  2,  2,  2,  4 )^{[4]},
             (  2,  2,  2,  2,  2,  4 )^{[2]},
             (  2,  2,  2,  2,  2,  2,  4 )^{[4]},
             (  2,  2,  2,  2,  2,  4 )^{[2]}, \\
&\mbox{}&    (  2,  2,  2,  2,  2,  2,  4 )^{[8]},
             (  2,  2,  2,  2,  2,  2,  2,  4 )^{[4]},
             (  2,  2,  2,  2,  2,  2,  2,  2,  4 )^{[8]},
             (  2,  2,  2,  2,  2,  2,  2,  4 )^{[4]}, \\
\end{eqnarray*}
This induces the following conjecture.

\begin{conjecture} The Basilica group $\Delta$ satisfies that
\[ I(\gamma_i(\Delta)/\gamma_{i+1}(\Delta)) = 
   \left\{ \begin{array}{cl} 
   (2^{[2k+2]},4) & \mbox{ if } i \in \{ 6 \cdot 2^k+1, \ldots, 8 \cdot 2^k \}\\
   (2^{[2k+3]},4) & \mbox{ if } i \in \{ 8 \cdot 2^k+1, \ldots, 9 \cdot 2^k \}\\
   (2^{[2k+4]},4) & \mbox{ if } i \in \{ 9 \cdot 2^k+1, \ldots, 11 \cdot 2^k \}\\
   (2^{[2k+3]},4) & \mbox{ if } i \in \{ 11 \cdot 2^k+1, \ldots, 12 \cdot 2^k \}\\
   \end{array} \right\} \mbox{ with } k \in \N_0. \]
\end{conjecture}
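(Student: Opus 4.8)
The plan is to derive the stated pattern from the self-similar structure of $\Delta$, using the output of the nilpotent quotient algorithm (the factors for $1 \le i \le 48$) only as the base of an induction. Recall that $\Delta = \langle a, b\rangle$ is the automaton group with self-similar embedding $\psi$ into $\Delta \wr C_2$ given, in suitable coordinates, by $\psi(a) = (b,1)$ and $\psi(b) = (a,1)\sigma$; let $\Delta_1$ be the index-$2$ first-level stabiliser, so that $\psi$ identifies $\Delta_1$ with a subgroup of $\Delta \times \Delta$, and recall that $\Delta$ is weakly branch, with a branching subgroup $K \trianglelefteq \Delta$ (one may take $K = \Delta'$) satisfying $K \times K \le \psi(K \cap \Delta_1)$. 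The first step is to make this weak-branch property quantitative along the lower central series: one shows there are explicit functions $f, g$ with $f(n), g(n) \sim n/2$ such that, modulo the $\psi$-image of $\gamma_{n+1}(\Delta)$, the subgroup $\psi(\gamma_n(\Delta) \cap \Delta_1)$ is squeezed between $\gamma_{g(n)}(\Delta) \times \gamma_{g(n)}(\Delta)$ and $\gamma_{f(n)}(\Delta) \times \gamma_{f(n)}(\Delta)$, together with a bounded ``diagonal'' contribution. This is the Basilica analogue of the lower-central-series computations of Rozhkov and of Bartholdi--Grigorchuk for the Grigorchuk group (cf.\ \cite{Roz96}, \cite{Gri99}), and it can equally be phrased through the $L$-presentation of \cite{BVi05}: its substitution $\sigma$ induces, by the constructions of Sections \ref{abelquot} and \ref{invar}, an endomorphism $\ol{\sigma}$ of each graded piece $\gamma_i(\Delta)/\gamma_{i+1}(\Delta)$, an algebraic incarnation of $\psi$ whose orbit structure encodes the same information.

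Second, one converts this into a recursion on the associated graded abelian group $\bigoplus_{i \ge 1} \gamma_i(\Delta)/\gamma_{i+1}(\Delta)$, keeping track of the full $\Z$-module structure rather than only the $\F_2$-rank, since the statement asserts precisely one $\Z/4$ summand in each factor. The commutator calculus shows that the off-diagonal part of $(\gamma_n(\Delta) \cap \Delta_1)\gamma_{n+1}(\Delta)/\gamma_{n+1}(\Delta)$ is built, via $\psi$, from pieces $\gamma_j(\Delta)/\gamma_{j+1}(\Delta)$ with $j$ near $n/2$, while the diagonal part and the recurring $\Z/4$ come from a short, explicit list of elements (images of the relators of the $L$-presentation of \cite{BVi05} and of $[a,b]$). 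In generating-function terms the conjecture is equivalent to saying that the Poincar\'e series of this graded group satisfies a functional equation under $t \mapsto t^2$ whose unique solution with the prescribed initial segment is exactly the four-block formula of the statement.

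Third comes the bookkeeping: solve the recursion, observe that the algorithm's output for $1 \le i \le 48$ already verifies the initial segment together with the blocks $k = 0, 1, 2$ in all four cases -- which over-determines the recursion -- and run the induction, the block for parameter $k$ being expressed in terms of the block for $k-1$. Since the $k=0$ and $k=1$ blocks are verified outright, the induction closes and yields the formula for all $k \in \N_0$.

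The hard part is the middle step. Because $\Delta$ is only \emph{weakly} branch, $\psi$ is not onto $\Delta \times \Delta$ on any finite-index subgroup, so the two inclusions $\gamma_{g(n)}(\Delta)^{\times 2} \le \psi(\gamma_n(\Delta) \cap \Delta_1) \le \gamma_{f(n)}(\Delta)^{\times 2}$ (modulo $\gamma_{n+1}$) have a genuine gap that must be closed by hand: one must determine, for each $n$, exactly which weight-$\lceil n/2 \rceil$ commutators survive in each coordinate and where the single $\Z/4$ sits. Turning the heuristic ``$f \sim g \sim n/2$'' into the sharp thresholds $6, 8, 9, 11, 12$ and the exponents $2k{+}2$, $2k{+}3$, $2k{+}4$, $2k{+}3$ will require a careful analysis of the small quotients $\gamma_2(\Delta)/\gamma_5(\Delta)$ together with the relators of the $L$-presentation, and I expect this to be somewhat more intricate than the Grigorchuk case recorded in the theorem of Rozhkov cited above.
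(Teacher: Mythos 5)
The first thing to say is that the paper does not prove this statement: it is presented only as a conjecture, induced from the abelian invariants of $\gamma_i(\Delta)/\gamma_{i+1}(\Delta)$ computed by the nilpotent quotient algorithm for $1 \le i \le 48$. So there is no proof in the paper to compare yours against, and your proposal must be judged as an attempt to settle an open conjecture. As such it is a reasonable programme, but it is not a proof: the step you yourself label ``the hard part'' is exactly the content of the conjecture and is nowhere carried out. Concretely, (i) the functions $f,g$ and the two inclusions $\gamma_{g(n)}(\Delta)^{\times 2}\le\psi(\gamma_n(\Delta)\cap\Delta_1)\le\gamma_{f(n)}(\Delta)^{\times 2}$ modulo $\gamma_{n+1}(\Delta)$ are asserted, not established, and for a weakly branch group the gap between these two inclusions is where all the difficulty lives; (ii) the ``functional equation under $t\mapsto t^2$'' for the Poincar\'e series is never written down, and the assertion that its unique solution with the given initial segment is the four-block formula is close to a restatement of the conjecture; (iii) the claim that the data for $i\le 48$ ``over-determines the recursion'' is empty until the recursion exists --- $48=12\cdot 2^2$ covers only the blocks $k\le 2$, and no finite initial segment establishes a pattern in the absence of a proven induction step.

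There is also a quantitative reason to doubt that the sandwich you propose, taken at face value, produces the right answer. If $\psi(\gamma_n(\Delta)\cap\Delta_1)$ were genuinely squeezed between two full direct products $\gamma_j(\Delta)\times\gamma_j(\Delta)$ with $j\sim n/2$, one would expect the rank of the factors roughly to double when $i$ doubles; but the conjectured formula has the rank growing only by $2$ when $i$ doubles (from $2k+2$ near $i=6\cdot 2^k$ to $2k+4$ near $i=12\cdot 2^k$), i.e.\ logarithmically in $i$. So the off-diagonal contribution must collapse almost entirely and what survives must be essentially diagonal plus a bounded correction; identifying exactly which commutators survive, and where the single $\Z/4$ summand sits, is precisely the theorem one would have to prove, and your sketch supplies no mechanism for it. Until that mechanism is made explicit, the proposal is a plan for a proof, not a proof, and the statement remains, as in the paper, a conjecture supported by computation.
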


Baumslag's group $B$ and the Lamplighter group $L$ are both known to be
metabelian. This yields that their lower central series patterns can be
deduced theoretically. We include the abelian invariants of $\gamma_i(*)
/ \gamma_{i+1}(*)$ for these two groups as far as we computed them for
completeness:
\[ \mbox{ for $B$: } (3, 0), 3^{[10]} \;\;\;\;\;\;\;\;
   \mbox{ for $L$: } (2, 0), 2^{[8]}. \]

\subsection{The Fabrykowski-Gupta group and its generalisation}

An infinite series of groups with an ascending finite $L$-presentation are 
introduced in Appendix \ref{appB}: the {\em generalised Fabrykowski-Gupta 
groups} $\Gamma_p$ with $p \geq 3$. We used these groups as sample inputs 
for the nilpotent quotient algorithm; the results are outlined in this 
section. They support that this family of groups contains some very 
interesting groups.

First, we briefly summarise the results of our algorithm on $\Gamma_p$ for
some small $p$ in Table 2 using the same format as in Table 1. Note that
all considered $L$-presentations are ascending in this case. Additionally,
the table contains a column noting whether our algorithm found a maximal 
nilpotent quotient.

\begin{table}[htb]
\begin{center}
\label{table2}
\begin{tabular}{|c|c|c|c|c|}
\hline
Group      & max quot & class & gens & time (h:min) \\
\hline
$\Gamma_3$    & no  & 71 & 112 & 1:50  \\
$\Gamma_4$    & no  & 66 & 146 & 1:55  \\
$\Gamma_5$    & no  & 53 &  60 & 1:58  \\
$\Gamma_6$    & yes &  3 &   4 & 0:00  \\
$\Gamma_7$    & no  & 44 &  50 & 1:37  \\
$\Gamma_8$    & no  & 52 & 116 & 1:47  \\
$\Gamma_9$    & no  & 58 &  84 & 1:54  \\
$\Gamma_{10}$ & yes &  5 &   6 & 0:00  \\
$\Gamma_{11}$ & no  & 33 &  35 & 1:48  \\
$\Gamma_{12}$ & yes &  6 &   7 & 0:00  \\
$\Gamma_{14}$ & yes &  7 &   8 & 0:00  \\
$\Gamma_{15}$ & yes &  5 &   6 & 0:00  \\
$\Gamma_{18}$ & yes & 15 &  16 & 0:06  \\
$\Gamma_{20}$ & yes &  6 &   7 & 0:02  \\
$\Gamma_{21}$ & yes &  7 &   8 & 0:04  \\
\hline
\end{tabular}
\caption{Fabrykowski-Gupta groups $\Gamma_p$ for some small $p$}
\end{center}
\end{table}

In the following we discuss the lower central series factors of the
groups $\Gamma_p$ in more detail. First, we consider the case that
$p$ is not a prime-power. We summarise our results in the following 
conjecture.

\begin{conjecture}
If $p$ is not a prime-power, then $\Gamma_p$ has a maximal nilpotent
quotient.
\end{conjecture}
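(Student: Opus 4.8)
The plan is to reduce the claim to a statement about the action of the substitution endomorphism on abelianised quotients, along the lines of the nilpotent quotient algorithm. Let $\Gamma_p = \langle S \mid \emptyset \mid \{\sigma\} \mid R \rangle$ be the ascending finite $L$-presentation given in Appendix~\ref{appB}, with $F$ the free group on $S$, $K = \langle \bigcup_{j \geq 0} \sigma^j(R) \rangle^F$ and $\Gamma_p = F/K$. Since a nilpotent group is \emph{maximal} nilpotent quotient exactly when the lower central series terminates, i.e.\ $\gamma_c(\Gamma_p) = \gamma_{c+1}(\Gamma_p)$ for some $c$, the strategy is to show that the lower central series of $\Gamma_p$ stabilises. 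First I would use the induction-step picture of Section~\ref{invar}: for each $n$ the quotient $\gamma_{n-1}(\Gamma_p)/\gamma_n(\Gamma_p)$ is a quotient of the module $M = K_{n-1}/[K_{n-1},F] \cong \gamma_{n-1}(F/K_{n-1})$ by the image $\tau^*(K)$, and Lemma~\ref{taugens} describes $\tau^*(K)$ as the $\overline{\Phi^*} = \langle \overline\sigma \rangle$-submodule of $M$ generated by the finitely many images $\tau^*(r)$, $r \in R$. The key point is that $\overline\sigma$ acts on these finite abelian layers; I would track, layer by layer, the orbit of the relator images under $\overline\sigma$ and show that beyond a bounded class the entire layer is killed.

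The concrete mechanism I expect to exploit is the arithmetic of $p$ when $p$ is not a prime power. In the abelianised setting of Section~\ref{abelquot}, $G/\gamma_2(G) \cong \Z^m/U_G$ with $U_G$ generated by the $a_r M_{\sigma^j}$; writing $n = \mathrm{rk}$, the matrix $M_\sigma$ and the integer $p$ interact so that the exponents appearing are divisors of $p$. Because $p$ is not a prime power, $\Z/p$ is a nontrivial direct product of coprime cyclic groups, and a pro-$q$ obstruction (for each prime $q \mid p$) to nilpotency collapses: an element that must survive $q$-adically in the Grigorchuk/Gupta–Sidki-type analysis cannot survive simultaneously for two distinct primes, forcing the covering central extensions $H^* \to H$ of Section~\ref{invar} to split off completely after finitely many steps. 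I would make this precise by showing that the $\overline\sigma$-module $M$ at level $n$, once $n$ is large enough relative to $p$, has the property that the relator submodule $\tau^*(K)$ is all of $M$; equivalently, the "deficiency" recorded in the nilpotent quotient system stops growing. The data in Table~2 for $\Gamma_6, \Gamma_{10}, \Gamma_{12}, \Gamma_{14}, \Gamma_{15}, \Gamma_{18}, \Gamma_{20}, \Gamma_{21}$ — all non-prime-powers with a finite maximal quotient — is exactly the numerical shadow of this phenomenon, and I would check that the class at which stabilisation occurs matches a formula in terms of the prime factorisation of $p$.

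Technically I would proceed by: (1) pinning down $M_\sigma \bmod \gamma_n$ and the images $\tau^*(r)$ explicitly from the $L$-presentation of $\Gamma_p$; (2) decomposing each finite layer $\gamma_{n-1}(\Gamma_p)/\gamma_n(\Gamma_p)$, which is a $\Z[p]$-torsion module, into its primary components for the primes $q \mid p$; (3) showing that for each such $q$ the $q$-primary part of the lower central series of $\Gamma_p$ behaves like that of a branch group whose "natural" prime is some \emph{other} divisor of $p$, hence becomes trivial beyond a bounded class; and (4) concluding that all layers vanish for $n$ past the maximum of these bounds, which yields a finite maximal nilpotent quotient. The main obstacle will be step~(3): making rigorous the claim that the self-similar structure of $\Gamma_p$ forces the $q$-primary lower central layers to die, without simply re-deriving the whole lower central series — I would look for a short-cut via the contraction/self-replicating property of the generalised Fabrykowski–Gupta action on the rooted $p$-ary tree, using that $\gamma_n(\Gamma_p)$ is commensurable with a direct product of copies of $\gamma_{n'}(\Gamma_p)$ for $n' \approx n/p$ and playing off the incompatible $q$-adic valuations. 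A secondary difficulty is ruling out the possibility that the series stabilises only after an $n$-dependent but \emph{unbounded} delay; the coprimality of distinct $q \mid p$ should provide the uniform bound, but verifying uniformity is where the argument must be careful.
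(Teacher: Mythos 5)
There is no proof to compare against here: the statement you were given is a \emph{conjecture} in the paper, and the only support the authors offer for it is the computational evidence of Table~2, where the nilpotent quotient algorithm happened to find a maximal nilpotent quotient for each of the non-prime-powers $p\in\{6,10,12,14,15,18,20,21\}$ that were tried. So the question is whether your outline establishes the statement on its own, and it does not. The entire content of the conjecture is concentrated in your step~(3), and there you assert rather than prove the key mechanism. The heuristic that ``an element that must survive $q$-adically cannot survive simultaneously for two distinct primes'' has no precise statement behind it: each layer $\gamma_n(\Gamma_p)/\gamma_{n+1}(\Gamma_p)$ is a finite abelian group that splits canonically as the direct sum of its $q$-primary components over the primes $q$ dividing $p$, and nothing in the $L$-presentation or in the module description of Lemma~\ref{taugens} prevents each primary component from being nonzero for every $n$, independently of the others. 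Coprimality by itself produces no interaction between the primary parts; you would need a genuinely structural reason why the relator submodule $\tau^*(K)$ exhausts $M$ beyond a bounded class precisely when $p$ has two distinct prime divisors, and no such reason is identified.

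Two further ingredients you lean on are themselves unproved. The commensurability of $\gamma_n(\Gamma_p)$ with a product of copies of $\gamma_{n'}(\Gamma_p)$ for $n'\approx n/p$ is a branch-group self-similarity statement known (from \cite{Bar05}) only for $p=3$; for composite, non-prime-power $p$ the group $\Gamma_p$ need not be branch or just-infinite in the required sense, so this would have to be established from scratch. And the hoped-for ``formula in terms of the prime factorisation of $p$'' for the stabilising class is not supported by the data: Table~2 reports classes $3,5,6,7,5,15,6,7$ for $p=6,10,12,14,15,18,20,21$; in particular $p=18=2\cdot 3^2$ stabilises only at class $15$ while $p=20=2^2\cdot 5$ stabilises at class $6$, which does not fit any evident function of the factorisation. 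Framing the problem via the action of $\ol{\sigma}$ on the layers $M$ using the machinery of Section~\ref{invar} is the right starting point, but as written your argument proves nothing beyond what the finitely many machine computations already show, and the statement remains a conjecture.
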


Next, we consider the case that $p$ is a prime. For the smallest possible
prime $p=3$, there is a theoretical description of the lower central series
factors of $\Gamma_3$ known from \cite{Bar05}. Our computations confirm
the following theorem.

\begin{theorem} {\rm (See \cite{Bar05})}
\[ rk(\gamma_i(\Gamma_3)/\Gamma_{i+1}(\Gamma_3)) = 
   \left\{ \begin{array}{cl} 
  2 \mbox{ or } 1 & \mbox{ if } i= 1 \mbox{ or } 2 \mbox{ resp. } \\
  2 & \mbox{ if } i \in \{3^k+2,\ldots, 2\cdot 3^k+1\} \\
  1 & \mbox{ if } i \in \{2\cdot 3^k+2,\ldots, 3^{k+1}+1 \} \\
   \end{array} \right\} \mbox{ with } k \in \N_0. \]
\end{theorem}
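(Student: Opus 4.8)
The plan is to pass from $\Gamma_3$ to its associated graded Lie algebra $\mathcal{L}=\bigoplus_{i\geq 1}\gamma_i(\Gamma_3)/\gamma_{i+1}(\Gamma_3)$ and to feed the self-similarity of $\Gamma_3$ into it. As a preliminary reduction I would check that each lower central factor is an elementary abelian $3$-group, so that the rank in the statement is exactly $\dim_{\F_3}$ of the $i$-th homogeneous piece and $\mathcal{L}$ is a Lie algebra over $\F_3$: this holds because $\Gamma_3$ is generated by two tree automorphisms all of whose portrait labels are powers of a single $3$-cycle, whence $\Gamma_3$ embeds into a pro-$3$ group and is in particular residually a finite $3$-group, while the explicit relators of the $L$-presentation from \cite{Bar03} control the exponent of each factor. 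I would also record the branch structure: $\Gamma_3$ is regularly branched over a finite-index subgroup $K$, so that the decomposition map $\psi\colon\Gamma_3\to\Gamma_3\wr C_3$ satisfies $\psi(K)\supseteq K\times K\times K$.

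The engine of the argument is a degree-tripling recursion. Combining $\psi$, the branching over $K$, and the explicit relators, one shows that $\psi$ matches $\gamma_{3n}(\Gamma_3)$ --- up to a bounded additive shift of the index and up to the finite-index discrepancy between $K$ and $\Gamma_3$ --- with the preimage of $\gamma_n(\Gamma_3)\times\gamma_n(\Gamma_3)\times\gamma_n(\Gamma_3)$; equivalently, the substitution $\sigma$ of the $L$-presentation induces an operator on $\mathcal{L}$ that multiplies degree by $3$ with a bounded additive error at the ``edge''. Hence the relator packet $\sigma^k(R)$ imposes Lie relations concentrated near degree $3^k$, for every $k\geq 0$. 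Translating this into a statement about the Hilbert series $f(t)=\sum_{i\geq 1}\dim_{\F_3}\bigl(\gamma_i(\Gamma_3)/\gamma_{i+1}(\Gamma_3)\bigr)t^i$ yields a functional equation expressing $f(t)$ through $f(t^3)$ plus a correction polynomial of bounded degree. Iterating $t\mapsto t^3$ reconstructs the stated pattern: the $k$-th super-period runs over the $2\cdot 3^k$ indices from $3^k+2$ to $3^{k+1}+1$ (which is $3$ times the length of the previous one), and the functional equation forces it to split into an initial rank-$2$ stretch and a final rank-$1$ stretch each of length $3^k$, the displacement $3^k+2$ coming from the bounded edge-error of $\sigma$. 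The two small anomalous indices ($rk=2$ at $i=1$, which is just $\Gamma_3^{\mathrm{ab}}\cong C_3\times C_3$, and $rk=1$ at $i=2$) are settled by hand, and the whole prediction is cross-checked against the output of the nilpotent quotient algorithm up to class $71$.

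I expect the main obstacle to be proving that the passage from the group presentation to a presentation of $\mathcal{L}$ is \emph{exact}: that $\mathcal{L}$ carries no relations beyond those forced by $R$, the packets $\sigma^k(R)$, and the Jacobi identity. The presentation by itself only bounds each $\dim_{\F_3}(\gamma_i/\gamma_{i+1})$ from above; the matching lower bound has to come from exhibiting enough linearly independent elements, and this is exactly where the regular branching over $K$ is indispensable, since it forces the $\psi$-preimage of $\gamma_n(\Gamma_3)\times\gamma_n(\Gamma_3)\times\gamma_n(\Gamma_3)$ to lie in $\gamma_{3n-c}(\Gamma_3)$ for a constant $c$ (together with a converse inclusion), giving a self-similar lower bound for $f$ that one matches to the upper bound by induction on the nilpotency class. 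Keeping careful track of the exact boundary behaviour of $\sigma$ --- the reason clean powers of $3$ turn into the shifted intervals of the statement --- is fiddly but mechanical once the tripling recursion and the branch-theoretic bounds are in place. Since the theorem is credited to \cite{Bar05}, one may alternatively just invoke the computation carried out there, with the algorithm of the present paper providing the independent numerical confirmation announced in the text.
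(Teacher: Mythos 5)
The paper does not actually prove this theorem: it is quoted from \cite{Bar05}, and the surrounding text only reports that the nilpotent quotient computations (up to class $71$, per Table 2) are consistent with it. There is therefore no internal proof to compare yours against; what you have written is a reconstruction of the argument of the cited reference. Your outline does identify the right machinery --- passing to the graded Lie algebra $\mathcal L$ over $\F_3$, using the wreath recursion $\psi$ and the regular branching over a finite-index subgroup $K$ to obtain a degree-tripling operator, and packaging the result as a functional equation relating $f(t)$ to $f(t^3)$ --- and this is essentially the method used in \cite{Bar05} for $\Gamma_3$ and $GS_3$.

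As a proof, however, the proposal is not complete, and the two places where you defer are exactly where the content of the theorem lives. First, the lower bound: the $L$-presentation and its $\sigma$-iterates only bound each $\dim_{\F_3}(\gamma_i/\gamma_{i+1})$ from above, and the claim that $\mathcal L$ carries no further relations (equivalently, that the branching inclusion $\psi^{-1}(\gamma_n\times\gamma_n\times\gamma_n)\subseteq\gamma_{3n-c}$ can be reversed up to controlled error) is the hard half of Bartholdi's argument; you name it as the main obstacle but do not supply it. Second, the boundary bookkeeping: the specific endpoints $3^k+2$ and $2\cdot 3^k+1$ are precisely the ``bounded additive shift'' you call fiddly but mechanical, yet without computing that shift the same functional-equation skeleton would be equally consistent with intervals starting at $3^k+1$ or $3^k+3$, so this step cannot be waved away. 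If your intent is simply to invoke \cite{Bar05} and treat the algorithm's output as confirmation, that matches what the paper itself does; if the intent is a self-contained proof, these two deferred steps are genuine gaps.
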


For the primes $p = 5,7,11$, we list the lower central series factors
$\gamma_i(\Gamma_p) / \gamma_{i+1}(\Gamma_p)$ obtained by our algorithm
in the following. Note that all determined factors are elementary abelian 
$p$-groups and we list their ranks in collected form. 

\begin{items}
\bulit $\Gamma_5$:
$\;\;\;\; 2, 1^{[3]}, 2, 1^{[13]}, 2^{[5]}, 1^{[30]}$.
\bulit $\Gamma_7$:
$\;\;\;\; 2, 1^{[5]}, 2, 1^{[33]}, 2^{[4]}$.
\bulit $\Gamma_{11}$:
$\;\;\; 2, 1^{[9]}, 2, 1^{[22]}$.
\end{items}

Thus if $p$ is a prime, then the groups $\Gamma_p$ seem to have a very slim 
lower central series. It seems very likely that these groups exhibit a lower 
central series pattern similar to that of $\Gamma_3$ and it would be very 
interesting to spot and prove this. However, for this purpose a larger 
computed sequence would be helpful. We only formulate the following conjecture.

\begin{conjecture}
Let $p$ be an odd prime. Then $\Gamma_p$ is a group of width 2.
\end{conjecture}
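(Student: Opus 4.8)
The plan is to pass to the associated graded Lie algebra over $\F_p$ and to exploit the self-similarity encoded in the $L$-presentation of $\Gamma_p$. Set $\mathcal{L} = \bigoplus_{i \ge 1} \mathcal{L}_i$ with $\mathcal{L}_i = \gamma_i(\Gamma_p)/\gamma_{i+1}(\Gamma_p)$; since the lower central factors are elementary abelian $p$-groups — as the data above indicate and as one checks from the $L$-presentation — $\mathcal{L}$ carries the structure of an $\F_p$-Lie algebra, and ``$\Gamma_p$ has width $2$'' means precisely that $\dim_{\F_p} \mathcal{L}_i \le 2$ for all $i$, with equality for infinitely many $i$. The lower bound is immediate from $\Gamma_p/\gamma_2(\Gamma_p) \cong (\Z/p)^2$, so $\dim \mathcal{L}_1 = 2$; the recursion below will show that this value recurs.

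First I would fix the ascending $L$-presentation $\langle S \mid \emptyset \mid \{\sigma\} \mid R \rangle$ of $\Gamma_p$ from Appendix \ref{appB} and analyse the induced action of $\sigma$ on $\mathcal{L}$. Because $\sigma$ replaces the generators by words supported one level deeper in the rooted $p$-ary tree, it maps $\gamma_i(\Gamma_p)$ into $\gamma_{f(i)}(\Gamma_p)$ for a function $f$ with $f(i)/i \to p$; made precise, this should exhibit $\mathcal{L}$ as ``self-similar'': there is a graded Lie subalgebra $\mathcal{L}' = \sigma(\mathcal{L}) \subseteq \mathcal{L}$ whose degree set is a dilated copy of that of $\mathcal{L}$ and which, on each fundamental block of degrees of the shape $\{c p^k+1, \ldots, c' p^k\}$, has codimension in $\mathcal{L}$ governed by the block one level down. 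This is the Lie-theoretic shadow of the branch structure that Bartholdi uses for $p = 3$ in \cite{Bar05}.

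The proof would then be an induction on blocks. The base case is the explicit determination of $\mathcal{L}_i$ as $i$ runs over one complete fundamental block; the sequences displayed above for $p = 5, 7, 11$ are the computational evidence, and one must prove that a full block of them is correct and complete by reducing the relations in $R$ (and their $\sigma$-iterates) modulo $\gamma_{i+1}$ degree by degree up to that bound — a finite task once the block length has been pinned down as a function of $p$. The induction step reads off $\dim \mathcal{L}_i$ on $\{c p^{k}+1, \ldots, c' p^{k}\}$ from $\dim \mathcal{L}_j$ on $\{c p^{k-1}+1, \ldots, c' p^{k-1}\}$ via the injection $\sigma$ together with the bounded correction coming from $R$; since the seed has all ranks $\le 2$, and both $\sigma$ (being injective) and the correction preserve this bound, one obtains $\dim \mathcal{L}_i \le 2$ for all $i$, and the degrees carrying rank $2$ recur, giving width exactly $2$.

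The hard part is controlling that correction term: showing that, past the fundamental block, the only new generators of $\mathcal{L}$ in any given degree are the $\sigma$-images of lower-degree generators, so that no degree acquires rank $3$. For $p = 3$ this is exactly what \cite{Bar05} establishes, processing the relevant Lie relations by hand; for general odd $p$ the combinatorics of the relator set $R$ and of the action of $\sigma$ on it is considerably heavier, and what is genuinely missing is a uniform-in-$p$ argument rather than a prime-by-prime check (which is also why the present statement is only a conjecture). I would attack it by isolating, as for $\Gamma_3$, a small family of ``fundamental'' Lie relations from which all others follow under $\sigma$, and then verifying that these cut each graded piece down to dimension at most $2$; carrying this out would at the same time determine the exact rank pattern, which the data above suggest should be the obvious $p$-adic analogue of the $\Gamma_3$ pattern.
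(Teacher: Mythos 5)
The statement you were asked to prove is labelled a \emph{conjecture} in the paper, and the paper contains no proof of it: the only support offered is the output of the nilpotent quotient algorithm for $p=5,7,11$ up to classes $53$, $44$ and $33$ respectively (the rank sequences $2,1^{[3]},2,1^{[13]},2^{[5]},1^{[30]}$ etc.), together with the analogy with Bartholdi's theorem for $\Gamma_3$ from \cite{Bar05}. So there is no proof to compare yours against, and your own text concedes the same point --- you say explicitly that a uniform-in-$p$ argument ``is genuinely missing''. What you have written is a research programme, not a proof, and it should be assessed as such.

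As a programme it is the natural one (pass to the graded Lie ring, exploit the self-similarity of the $L$-presentation, induct over dilated blocks of degrees, seed with a finite computation), but the two load-bearing steps are asserted rather than established. First, the claim that $\Sigma$ induces a degree-dilating map of $\mathcal{L}$ whose image has codimension ``governed by the block one level down'' is exactly the branch-structure analysis that \cite{Bar05} carries out for $p=3$; nothing in your sketch shows it survives for general odd $p$, and note that injectivity of $\Sigma$ as a group homomorphism does not give injectivity of the induced maps $\mathcal{L}_i\to\mathcal{L}_{f(i)}$ (an element of $\gamma_i$ can land in $\gamma_{f(i)+1}$), nor is injectivity even what you need: the upper bound $\dim\mathcal{L}_i\le 2$ requires a \emph{spanning} statement, that the $\Sigma$-images of lower-degree classes together with a bounded correction generate each graded piece, which you do not address. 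Second, the ``bounded correction'' step --- that processing the relators in $\RELS$ and their $\Sigma$-iterates never leaves a degree of rank $3$ --- is the entire content of the conjecture, and you defer it. Two smaller points: the requirement of ``equality for infinitely many $i$'' is redundant, since $\Gamma_p/\gamma_2(\Gamma_p)\cong(\Z/p)^2$ already forces the supremum to be at least $2$ once the upper bound holds; and the assertion that all lower central factors are elementary abelian $p$-groups is itself only computationally observed for prime $p$ (it fails for prime powers, e.g.\ $\Gamma_4$), so if you want to work over $\F_p$ rather than with the graded Lie ring over $\Z$ you would have to prove that too.
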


Finally, we consider the case that $p$ is a prime-power, say $q^n$. All the
obtained lower central series factors $\gamma_i(\Gamma_p) / 
\gamma_{i+1}(\Gamma_p)$ are $q$-groups in this case and, except for some 
initial entries, they are elementary abelian. Again, it would be interesting 
to find and prove a general pattern for these factors.

\begin{items}
\bulit $\Gamma_4$: 
$\;\;\;\; (4,4), (4), 2^{[4]}, 3^{[3]}, 2^{[13]}, 3^{[12]}, 2^{[32]}$. 
\bulit $\Gamma_8$:
$\;\;\;\; (8,8),(8),(4)^{[4]}, 2, 1, 2^{[2]}, 3, 2, 3^{[2]}, 4, 3^{[8]}, 
          2^{[23]}, 3^{[5]}, 2$.
\bulit $\Gamma_9$:
$\;\;\;\; (9,9),(9)^{[2]}, 1^{[5]}, 2^{[6]}, 3, 2^{[17]}, 1^{[26]}$.
\end{items}

Thus the groups $\Gamma_p$ still seem to be of finite width, but the width
grows with the exponent $n$ in the power $p = q^n$. 

\subsection{The Gupta-Sidki group and its generalisations}

The Gupta-Sidki group $GS$ has originally been introduced in \cite{GSi83}
and has become famous for its role in connection with the Burnside problems.
In Appendix \ref{appC} generalisations $GS_p$ of this group for all odd
primes $p$ are introduced and finite non-invariant $L$-presentation for
these groups are obtained. In this section, we investigate the groups 
$GS_p$ using our nilpotent quotient algorithm.

As a preliminary step, we discuss two different strategies to determine 
nilpotent quotients of $GS_p$ with our algorithm. First, we can apply our 
algorithm to the non-invariant $L$-presentation of $GS_p$ outlined in 
Appendix \ref{appC}. This is straightforward, but usually yields only
very limited results, as our algorithm is not effective on non-invariant 
$L$-presentations.

For a second, more effective approach we use the structure of $GS_p$ as
exhibited in Appendix \ref{appC}. Every $GS_p$ is of the form $GS_p \cong
D_p \rtimes C_p$, where $D_p$ is generated by $\{\sigma_1, \ldots, \sigma_p\}$ 
and the cyclic group $C_p$ acts by permuting these elements cyclically. An 
ascending $L$-presentation for $D_p$ is also included in Appendix \ref{appC}.
Now we can apply our algorithm to the ascending $L$-presentation of $D_p$ 
and determine $D_p/ \gamma_c(D_p)$ for some $c$. Then, defining $H_p = 
D_p/\gamma_c(D_p) \rtimes C_p$, we obtain $GS_p/ \gamma_i(GS_p) 
\cong H_p / \gamma_i(H_p)$ for all $i \leq c$. 

Table 3 summarises runtimes and a brief overview on the results of our 
algorithm applied to $GS_p$ for $p = 3,5,7$. The table uses the same 
notation as Table 1. Instead of a column `prop' it has a column `strategy'
which lists the used strategy and hence also determines whether our 
algorithm was applied to an ascending or non-invariant $L$-presentation.
Note that we applied the nilpotent quotient algorithm for 2 hours in all 
cases. Thus the runtimes for $GS_3$ with strategy 1 show that the first 5 
quotients are fast to obtain, while the 6th quotients takes over 2 hours 
and hence did not complete. Further, Table 3 shows that strategy 2 is more 
successful on $GS_3$ than strategy 1; a feature that we also observed for 
other $GS_p$. 

\begin{table}[htbp]
\begin{center}
\begin{tabular}{|c|c|c|c|c|}
\hline
Group    & strategy & class & gens & time (h:min) \\
\hline
$GS_3$   & 1 & 5     & 8 (215)  & 0:02  \\
\hline
$GS_3$   & 2 & 25    & 51  & 1:44  \\
$GS_5$   & 2 & 9     & 22  & 1:09  \\
$GS_7$   & 2 & 6     & 13  & 0:59  \\
\hline
\end{tabular}
\caption{The Gupta-Sidki groups $GS_p$ for some small primes $p$}
\end{center}
\end{table}

Next, we discuss the obtained results for the lower central series of
$\Gamma_p$ and $H_p$ in more detail. Our computational results for $GS_3$ 
agree with the following theoretical description of $\gamma_i(GS_3)/
\gamma_{i+1}(GS_3)$ from \cite{Bar05}.

\begin{theorem} {\rm (See \cite{Bar05})} \\
Let $\alpha_1=1$, $\alpha_2=2$, and $\alpha_n=2\alpha_{n-1}+\alpha_{n-2}$ 
for $n\ge 3$.  Then, for $n\ge2$, the rank of 
$\gamma_n(GS_3)/\gamma_{n+1}(GS_3)$ is the number of ways of writing $n-1$ 
as a sum $k_1\alpha_1+\dots+k_t\alpha_t$ with all $k_i\in\{0,1,2\}$.
\end{theorem}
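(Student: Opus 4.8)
I would prove this by passing from $GS_3$ to its associated graded Lie algebra and exploiting the self-similar structure of the group there; this is the argument of \cite{Bar05}. Since $GS_3$ is a $3$-group, each component $L_n:=\gamma_n(GS_3)/\gamma_{n+1}(GS_3)$ is a finite abelian $3$-group, and one shows along the way that the $L_n$ are elementary abelian, so that $\mathcal{L}:=\bigoplus_{n\ge1}L_n$ is a graded $\F_3$-Lie algebra (with trivial $p$-operation) whose graded dimensions are the ranks in question. Expanding the product, $\prod_{i\ge1}(1+t^{\alpha_i}+t^{2\alpha_i})=\sum_{m\ge0}c_m t^m$, where $c_m$ is exactly the number of representations $m=\sum_i k_i\alpha_i$ with all $k_i\in\{0,1,2\}$ (a finite count for each $m$, as $\alpha_i\to\infty$). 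Since $\dim_{\F_3}L_1=2$, the theorem is equivalent to the single identity of Poincar\'e series $\sum_{n\ge1}(\dim_{\F_3}L_n)\,t^n=t+t\prod_{i\ge1}(1+t^{\alpha_i}+t^{2\alpha_i})$, which is what I would aim to establish.

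The main tool is the branch structure of $GS_3$ acting on the ternary rooted tree. Write $\psi\colon\mathrm{St}_{GS_3}(1)\ra GS_3\times GS_3\times GS_3$ for the first-level decomposition, so that the rooted generator $a$ (of order $3$) permutes the three coordinates cyclically and the second generator satisfies $\psi(b)=(a,a^{-1},b)$; recall that $GS_3$ is regular branch, say over $K=\gamma_3(GS_3)$, so $\psi(K)\supseteq K\times K\times K$ with finite index. The central task is to transport this to $\mathcal{L}$: to exhibit a graded ideal $\mathfrak{h}\subseteq\mathcal{L}$ of finite codimension on which $\psi$ induces, in each degree, an isomorphism between $\mathfrak{h}$ and a regrading of $\mathfrak{h}\oplus\mathfrak{h}\oplus\mathfrak{h}$. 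Tracking how the $\mathcal{L}$-degree transforms under one level of descent --- the three coordinates being $a$, $a^{-1}$, $b$ by the defining relation $\psi(b)=(a,a^{-1},b)$ --- one is led to a linear recursion with characteristic polynomial $x^2-2x-1$, that is, $\alpha_{i+1}=2\alpha_i+\alpha_{i-1}$, so the degrees reached by descending $i$ levels from the bottom of the tree are the Pell numbers $\alpha_i$. Unwinding the self-similarity through all levels makes the Poincar\'e series of $\mathcal{L}$ telescope into an infinite product with one factor per level, the order-$3$ rooted element at level $i$ contributing $(1-t^{3\alpha_i})/(1-t^{\alpha_i})=1+t^{\alpha_i}+t^{2\alpha_i}$.

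Concretely I would convert the graded branch isomorphism into a (matrix) functional equation for $H(t)=\sum_{n\ge1}(\dim L_n)t^n$ of the shape $H(t)=P(t)+(1+t^{\alpha}+t^{2\alpha})\,H^{\flat}(t)$, where $P$ is an explicit polynomial recording the finite-codimension correction and $\mathcal{L}/\mathfrak{h}$, and $H^{\flat}$ is obtained from $H$ by the degree transformation of one level of descent. Because that transformation strictly increases degrees (the Pell growth rate is $1+\sqrt2>1$), iterating the equation collapses it to the claimed infinite product, leaving only finitely many low-degree coefficients to verify directly --- and these are precisely the ranks produced by the nilpotent quotient algorithm of Section~\ref{nilpquotII}. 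Reading off coefficients then gives $\dim_{\F_3}L_n=c_{n-1}=\#\{(k_i):k_i\in\{0,1,2\},\ \sum_i k_i\alpha_i=n-1\}$ for $n\ge2$, as asserted. An essentially equivalent route starts from the finite $L$-presentation of $GS_3$ of Appendix~\ref{appC}: the substitution $\sigma$ does not preserve degree, but the Lie-degrees of its iterated relators $\sigma^k(r)$ grow by the same Pell recursion, and $\mathcal{L}$ is the quotient of the free $\F_3$-Lie algebra on two generators by the ideal generated by the leading forms of all the $\sigma^k(r)$.

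The hard part will be the branch step itself: pinning down the correct finite-codimension ideal $\mathfrak{h}$, proving that $\psi$ genuinely induces the asserted graded isomorphism with the Pell degree shift (and in particular ruling out the naive ``$\times 3$'' scaling one might expect for a ternary tree), and showing along the way that each $L_n$ is elementary abelian. This is where the Lysenok-type presentation of $GS_3$ is needed, to control the kernel and cokernel of the induced map $\mathfrak{h}\ra\mathfrak{h}^{\oplus3}$ degree by degree; the interaction with the order-$3$ rooted generator and the restricted structure of $\mathcal{L}$ must be tracked carefully throughout.
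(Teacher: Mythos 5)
First, note that the paper does not prove this theorem at all: it is stated with the attribution ``(See \cite{Bar05})'', and the surrounding text only reports that the output of the nilpotent quotient algorithm \emph{agrees} with this known description. So there is no in-paper argument to compare yours against; the relevant proof lives in \cite{Bar05}, and your sketch correctly identifies its strategy: pass to the graded Lie ring of the lower central series, use the branch (self-similar) structure to obtain a functional equation for the Poincar\'e series, observe that the degree shift under one level of descent is governed by the Pell recursion $\alpha_n=2\alpha_{n-1}+\alpha_{n-2}$, and telescope to the product $\prod_{i\ge1}(1+t^{\alpha_i}+t^{2\alpha_i})$, whose coefficients count the representations $n-1=\sum_i k_i\alpha_i$ with $k_i\in\{0,1,2\}$. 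Your reduction of the theorem to the single identity $\sum_{n\ge1}(\dim L_n)t^n=t+t\prod_{i\ge1}(1+t^{\alpha_i}+t^{2\alpha_i})$ is correct (the extra $t$ accounts for $\dim L_1=2$ against $c_0=1$).

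That said, what you have written is a plan rather than a proof, and you say so yourself: the entire mathematical content is concentrated in the ``branch step'' that you defer --- identifying the finite-codimension graded ideal $\mathfrak h$, proving that the decomposition map induces a graded isomorphism onto three regraded copies with the Pell degree shift rather than the naive tripling one would guess for a ternary tree, showing that the factors $\gamma_n(GS_3)/\gamma_{n+1}(GS_3)$ are elementary abelian so that ``rank'' equals $\F_3$-dimension, and controlling the finitely many exceptional low degrees. None of these is routine; in \cite{Bar05} they occupy the bulk of the argument and are carried out by exhibiting an explicit basis of the Lie algebra indexed by branch portraits, with the weight function satisfying the stated recursion. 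Until that step is supplied, your proposal establishes only the (correct) combinatorial equivalence between the theorem and the product formula. As a roadmap to the proof in \cite{Bar05} it is accurate; as a proof it is incomplete at its central point.
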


For all primes $p>3$ no theoretical description of the lower central 
series factors of $GS_p$ is available; it would be very interesting to 
obtain one. In the following we outline our computed results 
for the ranks of $\gamma_i(H_p)/\gamma_{i+1}(H_p)$. These are isomorphic 
to $\gamma_i(GS_p)/\gamma_{i+1}(GS_p)$ for all $i \leq c$, where $c$ is 
the class listed in Table 3, and they are epimorphic images otherwise. 
This is indicated by a bar $\mid$ in the list below. 

\begin{items}
\bulit $H_5$:
$2, 1, 2^{[2]}, 3, 2, 3^{[2]}, 4 \; \mid \; 
4^{[3]}, 3^{[3]}, 4^{[4]}, 3, 4^{[2]}, 6^{[3]}, 5, 4, 2^{[3]}, 1^{[3]}$.
\bulit $H_7$: 
$2, 1, 2^{[2]}, 3^{[2]}, 4 \; \mid \; 3, 4^{[2]}, 5^{[6]}, 4, 3^{[5]}, 
2^{[3]}, 1^{[2]}$.
\end{items}

\subsection{Some finitely presented groups}

Each finitely presented group $\langle X\mid R\rangle$ has a finite 
ascending $L$-presentation of the form 
$\langle X\mid\emptyset\mid\{{\rm id}\}\mid R\rangle$ and 
hence the algorithm described here also applies to finitely presented 
groups. The following finitely presented groups are from \cite{Nic95}.
\begin{eqnarray*}
  G_1&=&\textrm{free group on 3 generators}\\
  G_2&=&\textrm{free group on 4 generators}\\
  G_3&=&\langle a,b\mid [a,[a,[a,b]]],[b,[b,[a,b]]]\rangle\\
  G_4&=&\langle x,y\mid [[y,x],y],[[[[[y,x],x],x],x],x] \rangle.
\end{eqnarray*}

Runtimes for these groups are outlined Table 4.

\begin{table}[htpb]
\begin{center}
\begin{tabular}{|c|c|c|c|}
\hline
Group       & class & gens  & time \\
\hline
$G_1$       & 8     & 1318  & 0:11 \\
$G_2$       & 6     &  964  & 0:04 \\
$G_3$       & 17    &  272  & 1:19 \\
$G_4$       & 20    &  275  & 1:31 \\
\hline
\end{tabular}
\caption{Some finitely presented groups}
\end{center}
\end{table}

Comparing these runtimes with the runtimes of the nilpotent quotient
algorithm of the {\sf NQ} package \cite{NQ} shows that the latter is 
significantly faster. This is mainly due to the fact that the {\sf NQL} 
package \cite{NQL} is implemented in \GAP code and uses the available
machinery for computing with polycyclic groups in \GAP, while the 
{\sf NQ} package \cite{NQ} is implemented in C code and all underlying
machinery has been designed for computing with nilpotent groups. 

\section{Appendix (by Laurent Bartholdi)}

One of the main reasons to introduce $L$-presentations was the desire to
understand better some examples of ``self-similar'' groups, and in
particular striking patterns along their lower central series.

By a self-similar group we mean a group $G$ acting on the set of words
$X^*$ over an alphabet $X$, and preserving the length and `prefix'
relation on $X^*$. This means that for every $x\in X,g\in G$ there are
$y\in X,h\in G$ with

\begin{equation}\label{eq:ss}
  g(xw)=yh(w) \mbox{ for all }w\in X^*.
\end{equation}

These groups have appeared across a wide range of mathematics, answering 
classical questions, for example on torsion and growth, in infinite group 
theory as well as establishing new links with complex dynamics. See the 
monograph~\cite{Nek05} for more details. 

We will capture the data in equation (\ref{eq:ss}) defining a self-similar 
group as follows: $(y,h)=\Psi(g,x)$, for some function $\Psi:G\times X
\to X\times G$. It suffices to specify $\Psi$ on $S\times X$ for some 
generating set $S$ of $G$.

In \cite{Bar03,BSV99} a few `sporadic' calculations of 
$L$-presentations for self-similar groups are described. A systematic 
construction of $L$-presentations for all self-similar groups arising as 
the iterated monodromy group of a quadratic, complex polynomial is given
in \cite{BMa06}. 

In this appendix, I describe two more infinite families of $L$-presentations.
These naturally generalize the groups constructed by Gupta and Sidki 
\cite{GSi83} and Fabrykowski and Gupta \cite{FGu85}. The former is an 
elementary family of infinite, finitely generated, torsion $p$-groups, 
while the latter is a group of intermediate word-growth. The `generalized' 
groups that I consider here are a small variations of their constructions.

I will present the calculations in compact form, mainly relying on Theorem
3.1 in \cite{Bar03}. They generalize the case $p=3$ described in that paper, 
correcting at the same time typographical and/or copying errors. The 
presentations given in \cite{Bar03} for the Gupta-Sidki and Fabrykowski-Gupta
groups are not correct as is. The presentations given in \cite{Sid87} and 
\cite{Gup84} are in principle equivalent, but not as readily amenable to 
manipulation and understanding.

The obtained $L$-presentations may be accessed in the forthcoming 
\textsf{GAP} package \textsf{FR}, see \cite{fr}.

\subsection{Generalized Fabrykowski-Gupta groups}
\label{appB}

For a fixed integer $p\ge3$ let $X=\Z/p\Z=\{0,\dots,p-1\}$ and consider 
the group $G=\langle a,r\rangle$ acting on $X^*$ via
\[\Psi(a,x)=(x+1,1),\qquad\Psi(r,0)=(0,r),\quad\Psi(r,1)=(1,a),
   \quad\Psi(r,x)=(x,1)\mbox{ else}.\]
In the case $p=3$, this is the Fabrykowski-Gupta group, which was shown in
\cite{FGu85} to be of ``subexponential word-growth''. The following 
theorem states the main result of this section.

\begin{theorem} \label{gFG}
For any $p$, the generalized Fabrykowski-Gupta group $G$ admits a finite 
ascending $L$-presentation with generators $\alpha,\rho$. With $\sigma_i 
= \rho^{\alpha^i}$ for $1 \leq i \leq p$, its iterated relations are 
\[\RELS=\left\{
    \alpha^p, 
    \left[\sigma_i^{\sigma_{i-1}^n},\sigma_j^{\sigma_{j-1}^m}\right],
    \sigma_i^{-\sigma_{i-1}^{n+1}} 
    \sigma_i^{\sigma_{i-1}^n\sigma_{i-1}^{\sigma_{i-2}^m}}
    \bigg|
    \begin{array}{c}
          1\le i,j\le p \\ 2\le|i-j|\le p-2 \\ 0\le m,n\le p-1
    \end{array}
    \right\}, \]
and its only endomorphism is defined by $\varphi(\alpha) = \rho^{\alpha^{-1}}$ 
and $\varphi(\rho) = \rho$. (Note that some relators in $\RELS$ are redundant,
since the elements $\sigma_1, \ldots, \sigma_p$ are conjugate. For example,
one may fix $i=1$.)
\end{theorem}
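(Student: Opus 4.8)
The plan is to apply Theorem~3.1 of \cite{Bar03}, which provides an explicit finite $L$-presentation for a self-similar group once the self-similar structure is given in the form $\Psi : S \times X \to X \times G$, provided that certain finiteness and regularity hypotheses (notably, that the associated ``branching'' data is finitely generated and that the group is contracting) are met. So the first step is to verify that the generalized Fabrykowski--Gupta group $G = \langle a, r \rangle$ with the stated $\Psi$ satisfies the hypotheses of that theorem: one checks that the stabilizer of the first level has finite index (index $p$, since $a$ acts as a $p$-cycle on $X$), computes the restrictions of $a$ and $r$ to the $p$ subtrees below the root, and confirms that $G$ is regular branch (or at least satisfies the weaker condition needed for the theorem). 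The restriction data is exactly what is encoded in $\Psi$: $a$ sends the whole tree cyclically with trivial sections, $r$ fixes each subtree with sections $r, a, 1, \dots, 1$.

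Next I would run through the mechanical construction in \cite{Bar03}. Introduce the free generators $\alpha, \rho$ mapping onto $a, r$, set $\sigma_i = \rho^{\alpha^i}$ (so $\sigma_i$ is the generator $\rho$ ``moved into the $i$-th coordinate''), and write down the wreath-recursion map: the endomorphism $\varphi$ on the free group that encodes ``take the section at a fixed vertex''. For the Fabrykowski--Gupta family the relevant section is the one below the letter making $r$ nontrivial; tracing through $\Psi(r,0) = (0,r)$ and $\Psi(a,x) = (x+1,1)$ shows that this section map sends $\rho \mapsto \rho$ and $\alpha \mapsto \rho^{\alpha^{-1}}$, which is exactly the claimed $\varphi$. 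Theorem~3.1 then says the kernel of $G \twoheadleftarrow F_{\{\alpha,\rho\}}$ is the normal closure of a finite set of ``seed'' relators together with all their images under the monoid generated by $\varphi$. The seed relators come from three sources: the order relation $\alpha^p$ (since $a^p = 1$); the commutation relations expressing that sections in coordinates $i$ and $j$ with $2 \le |i-j| \le p-2$ commute (because for such $i,j$ at least one of the two sections of any generator in those coordinates is trivial, so $[\sigma_i^{\sigma_{i-1}^n}, \sigma_j^{\sigma_{j-1}^m}]$ maps to a commutator of elements with disjoint support, i.e.\ to $1$); and the relations $\sigma_i^{-\sigma_{i-1}^{n+1}}\,\sigma_i^{\sigma_{i-1}^n \sigma_{i-1}^{\sigma_{i-2}^m}}$, which record the precise way the section of $r$ ``below a $1$'' equals $a$ and thus couples adjacent coordinates. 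Verifying that these specific words are relators is a direct computation with the wreath recursion, and verifying completeness is the content of the cited theorem (a Reidemeister--Schreier type argument combined with the contracting property).

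The main obstacle is the completeness direction, i.e.\ showing that no further relators are needed. This is precisely where Theorem~3.1 of \cite{Bar03} does the heavy lifting, and the essential point to check by hand is the combinatorial bookkeeping: that the finitely many seed relators listed, propagated along $\varphi^*$, already generate the full kernel. In practice one verifies the indexing conditions $1 \le i,j \le p$, $2 \le |i-j| \le p-2$, $0 \le m,n \le p-1$ are exactly the ranges produced by the Schreier transversal $\{1,\alpha,\dots,\alpha^{p-1}\}$ for the first-level stabilizer, and that applying $\varphi$ once reproduces these relators at the next level with indices shifted appropriately. A secondary, purely cosmetic point is the parenthetical remark: since the $\sigma_1,\dots,\sigma_p$ are all conjugate (by powers of $\alpha$) and $\varphi^*$ is closed under the relevant conjugations, fixing $i=1$ in the relator list loses nothing. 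I would handle the $p=3$ case first as a sanity check against \cite{Bar03} and \cite{Bar05} (correcting the typographical errors mentioned in the appendix's preamble), then present the general $p$ computation in the compact wreath-recursion notation, leaving the invocation of Theorem~3.1 to supply rigor for completeness.
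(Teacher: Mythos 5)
Your overall architecture is the right one and matches the paper's: the appendix also proves the theorem by following the strategy of Theorem~3.1 of \cite{Bar03}, taking $\Gamma=\langle\alpha,\rho\mid\alpha^p,\rho^p\rangle$, the wreath recursion $\Phi:\Delta\to\Gamma^p$, and a section $\Sigma$ of the first-coordinate projection as the endomorphism (your computation $\varphi(\rho)=\rho$, $\varphi(\alpha)=\rho^{\alpha^{-1}}$ agrees with the paper's $\Sigma$, and the completeness claim is likewise delegated to the cited Scholium/Theorem~3.1 rather than reproved). However, there is a concrete misstep in where you locate the Reidemeister--Schreier computation that is supposed to produce $\RELS$. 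In the paper, $\RELS$ is the set of $\Phi$-preimages of the relators in a presentation of $\Phi(\Delta)$ computed as a subgroup of \emph{index $p^p$} in $\Gamma^p=\langle\alpha_1,\dots,\alpha_p,\rho_1,\dots,\rho_p\mid\cdots\rangle$, with Schreier transversal the $p^p$ elements $\alpha_1^{n_1}\cdots\alpha_p^{n_p}$; the doubly indexed Schreier generators $\sigma_{i,n}=\sigma_i^{\alpha_i^{n}}$ and the Schreier relations $\sigma_{i,n+1}=\sigma_{i,n}^{\sigma_{i-1,m}}$ (used with $m=0$ to eliminate all $\sigma_{i,n}$, $n\neq0$, in favour of $\sigma_i^{\sigma_{i-1}^n}$) are exactly what generate both the exponent ranges $0\le m,n\le p-1$ and the precise shape of the third family of relators $\sigma_i^{-\sigma_{i-1}^{n+1}}\sigma_i^{\sigma_{i-1}^n\sigma_{i-1}^{\sigma_{i-2}^m}}$. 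Your proposal instead attributes these ranges to the index-$p$ transversal $\{1,\alpha,\dots,\alpha^{p-1}\}$ of the first-level stabilizer $\Delta\le\Gamma$; that computation has only $p$ coset representatives and $p$ Schreier generators $\rho^{\alpha^i}$, so it cannot produce the independent parameters $m,n$ for each $i$, and carried out literally it would not yield the stated relator set. Your heuristic justification of the third family (``records that the section of $r$ below a $1$ equals $a$'') correctly explains why these words are relators, but it does not substitute for the elimination computation that determines their exact form.

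A smaller point worth making explicit if you rewrite: you list $\alpha^p$ among the seed relators but should also account for $\rho^p$ (it is a relator of $\Gamma$ and the presentation of $\Phi(\Delta)$ contains $\sigma_1^p,\dots,\sigma_p^p$); it is absent from $\RELS$ only because $\varphi(\alpha^p)=(\rho^p)^{\alpha^{-1}}$, so it is recovered under iteration of the endomorphism. Finally, the paper does not verify contraction or the regular branch property as hypotheses; it simply carries out the constructive computation (presentation of $\Phi(\Delta)$, normal generators of $\ker\Phi$, section $\Sigma$ with $\Phi(\Sigma(x))=(x,\alpha^?,\dots,\alpha^?)$) and invokes the Scholium, so your emphasis on checking those hypotheses is not where the actual work lies.
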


The proof of this theorem follows the strategy of~\cite{Bar03}, Theorem
3.1, which proceeds as follows. We first consider the finitely presented 
group
\[\Gamma=\langle\alpha,\rho|\alpha^p,\rho^p\rangle\]
mapping naturally onto $G$ by `greek$\mapsto$latin'. We then consider
the subgroup $\Delta=\langle\rho^{\alpha^i}:0\le i<p\rangle$ of $\Gamma$, 
and the homomorphism $\Phi:\Delta\to\Gamma^p$, defined by 
\[\Phi(\rho^{\alpha^i})=(1,\dots,\rho,\alpha,\dots,1)
  \mbox{ with the $\rho$ at position $i$.} \]
We compute a presentation of $\Phi(\Delta)$; the kernel of $\Phi$ is
generated by the set $\RELS$ of $\Phi$-preimages of relators in that
presentation. Finally, we seek a section $\Sigma:\Gamma\to\Delta$ of 
the projection of $\Phi$ on its first coordinate. We then have, for all 
$x\in\Gamma$, 
\[\Phi(\Sigma(x))=(x,?,\dots,?),\] 
where the $?$ stand for unimportant elements of $\Gamma$. The following
result now allows to read off a finite ascending $L$-presentation for $G$.

\begin{scholium}
An $L$-presentation of $G$ is given by generators $\alpha,\rho$;
endomorphism $\Sigma$; and iterated relations $\RELS$.
\end{scholium}

In the remainder of this section, we apply this strategy to determine a
finite $L$-presentation for $G$ and thus prove Theorem \ref{gFG}.
A presentation of $\Phi(\Delta)$ can be determined by the 
Reidemeister-Schreier method. Consider first the presentation
\[\Pi=\langle\alpha_1,\dots,\alpha_p,\rho_1,\dots,\rho_p|\alpha_i^p,
      \rho_i^p,[\alpha_i,\alpha_j],[\alpha_i,\rho_j],[\rho_i,\rho_j]
      \mbox{ for }i\neq j\rangle;\]
this is a presentation of $\Gamma^p$, and $\Phi(\Delta)$ is the
subgroup $\langle \sigma_i:=\rho_i\alpha_{i+1}\rangle$. Here and below
indices are all treated modulo $p$. We rewrite this presentation as
\[\Pi=\langle\alpha_1,\dots,\alpha_p,\sigma_1,\dots,\sigma_p|\alpha_i^p,
      \sigma_i^p,[\alpha_i,\alpha_j],[\alpha_i,\sigma_j],
      [\sigma_i\alpha_{i+1}^{-1},\sigma_j\alpha_{j+1}^{-1}]
      \mbox{ for }i\neq j\rangle.\]
Next we rewrite the last set of relations either as 
$[\sigma_i,\sigma_j]$, if $2\le|i-j|\le p-2$, or as
$\sigma_i^{\alpha_i}=\sigma_i^{\sigma_{i-1}}$, in the other cases.

We choose as Schreier transversal all $p^p$ elements $\alpha_1^{n_1}\dots
\alpha_p^{n_p}$. The Schreier generating set easily reduces to 
$\{\sigma_{i,n}:=\sigma_i^{\alpha_i^{n_i}}\}$. The Schreier relations are 
all $[\sigma_{i,n},\sigma_{j,m}]$ for $2\le|i-j|\le p-2$, and all 
$\sigma_{i,n+1}=\sigma_{i,n}^{\sigma_{i-1,m}}$.

In particular, we can use this last relation (with $m=0$) to eliminate
all generators $\sigma_{i,n}$ with $n\neq0$, replacing them by
$\sigma_i^{\sigma_{i-1}^n}$. We obtain $\Phi(\Delta)=\langle\sigma_1,
\dots,\sigma_p|\sigma_1^p,\dots,\sigma_p^p, \RELS \rangle$, with
\[\RELS=\left\{\left[\sigma_i^{\sigma_{i-1}^n},
              \sigma_j^{\sigma_{j-1}^m}\right]
              \mbox{ whenever }2\le|i-j|\le p-2,\quad
              \sigma_i^{-\sigma_{i-1}^{n+1}}\sigma_i^{\sigma_{i-1}^n
              \sigma_{i-1}^{\sigma_{i-2}^m}}\right\}.\]
Note that $\Sigma$ satisfies $\Phi(\Sigma(x))=(x,\alpha^?,\dots,\alpha^?)$ 
for all $x\in\Gamma$, and thus clearly induces a monomorphism of $G$. Hence
we obtain the $L$-presentation of Theorem \ref{gFG} for $G$.

\subsection{Generalized Gupta-Sidki groups}
\label{appC}

Assume now that $p$ is an odd prime, and consider the following group 
$G=\langle a,t\rangle$: its action on $X^*$ is specified by
\[\Psi(a,x)=(x+1,1),\qquad\Psi(t,0)=(0,t),\quad\Psi(r,x)=(x,a^x)\mbox{ if }x>0.\]
If $p=3$, this is the original Gupta-Sidki group, which was shown
in~\cite{GSi83} to be an infinite, finitely generated,
$3$-torsion group.

With a similar notation as above, we consider
$\Gamma=\langle\alpha,\tau|\alpha^p,\tau^p\rangle$, the normal closure
$\Delta$ of $\tau$, and the map $\Phi:\Delta\to\Gamma^p$ defined by
\[\Phi(\tau^{\alpha^i})=(\dots,\alpha^{p-1},\tau,\alpha,\alpha^2,\dots)\mbox{ with the $\tau$ at position $i$.}\]
In the group
\[\Gamma^p=\Pi=\langle\alpha_1,\dots,\alpha_p,\tau_1,\dots,\tau_p|\alpha_i^p,\tau_i^p,[\alpha_i,\alpha_j],[\alpha_i,\tau_j],[\tau_i,\tau_j]\mbox{ for }i\neq j\rangle,\]
we consider now the subgroup $\Phi(\Delta)=\langle\sigma_i:=\tau_i\alpha_{i+1}\dots\alpha_{i+k}^k\dots\alpha_{i-1}^{-1}\rangle$.
We rewrite the presentation of $\Pi$ as
\[\Pi=\langle\alpha_1,\dots,\alpha_p,\sigma_1,\dots,\sigma_p|\alpha_i^p,\sigma_i^p,[\alpha_i,\alpha_j],[\alpha_i,\sigma_j],[\sigma_i\alpha_j^{j-i},\sigma_j\alpha_i^{i-j}]\mbox{ for }i\neq j\rangle.\]

We choose as Schreier transversal all $p^p$ elements
$\alpha_1^{n_1}\dots\alpha_p^{n_p}$. The Schreier generating set
easily reduces to $\{\sigma_{i,n}:=\sigma_i^{\alpha_i^{n_i}}\}$. The
Schreier relations become
$\sigma_{i,m+i}^{-1}\sigma_{j,n+i}^{-1}\sigma_{i,m+j}\sigma_{j,n+j}$.
Furthermore, an easy calculation gives

\begin{equation} \label{eq:gs}
  \left[\sigma_i^{(j-k)e}\sigma_j^{(k-i)e},\sigma_k^{(i-j)e}
  \sigma_i^{(j-k)e}\right]=\sigma_{i,(j-i)(i-k)e}^{-2(j-k)e}\sigma_i^{2(j-k)e}.
\end{equation}

For all $\ell>0$, we may choose arbitrarily $j,k$ such that $i,j,k$
are all distinct and $(j-i)(i-k)/2(j-k)\equiv\ell\pmod p$, and
use equation (\ref{eq:sigmail}) to express $\sigma_{i,\ell}$ in terms of
$\sigma_i,\sigma_j,\sigma_k$, namely

\begin{equation}\label{eq:sigmail}
  \sigma_{i,\ell} = \sigma_i\left[\sigma_i^{1/2}\sigma_j^{(k-i)/2(j-k)},
  \sigma_k^{(i-j)/2(j-k)}\sigma_i^{1/2}\right]^{-1}.
\end{equation}

Finally, we may also use equation (\ref{eq:sigmail}) to construct an
endomorphism $\Sigma$; we summarize:
\begin{theorem}
  The subgroup $D=\langle t\rangle^G$ of the Gupta-Sidki $p$-group
  admits a finite ascending $L$-presentation with generators
  $\sigma_1,\dots,\sigma_p$ generating a free group $\Delta$; iterated
  relations
  \[\RELS=\left\{\sigma_i^p;\,\sigma_{i,m+i}^{-1}\sigma_{j,n+i}^{-1}\sigma_{i,m+j}\sigma_{j,n+j}\right\};\]
  and an endomorphism $\Sigma:\Delta\to\Delta$, defined by
  \[\Sigma(\sigma_i)=\sigma_{1,i}
  \mbox{ as given in equation (\ref{eq:sigmail})}.\]
\end{theorem}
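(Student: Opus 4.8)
The plan is to follow the same Reidemeister--Schreier strategy that was used for the generalized Fabrykowski--Gupta groups and for the case $p=3$ in \cite{Bar03}, Theorem 3.1. First I would make precise the structure of the abstract group $\Gamma=\langle\alpha,\tau\mid\alpha^p,\tau^p\rangle$ and of the normal closure $\Delta=\langle\tau\rangle^\Gamma$, exhibiting $\Gamma$ as the split extension $\Delta\rtimes\langle\alpha\rangle$ and identifying $\Delta$ as the free group on the conjugates $\tau^{\alpha^i}$, $0\le i<p$. Next I would verify that the map $\Phi\colon\Delta\to\Gamma^p$ given in the excerpt is a well-defined homomorphism whose image is exactly the subgroup $\langle\sigma_i\rangle$ with $\sigma_i=\tau_i\alpha_{i+1}\alpha_{i+2}^2\cdots\alpha_{i-1}^{-1}$ (indices mod $p$), and confirm the rewriting of the presentation $\Pi$ of $\Gamma^p$ into the stated form with relators $[\sigma_i\alpha_j^{j-i},\sigma_j\alpha_i^{i-j}]$ for $i\neq j$ --- this is a direct substitution once one checks that $\alpha_i$ commutes with $\sigma_j$ for $i\neq j$ and that $\sigma_i\alpha_i^{?}$ recovers $\tau_i$ up to the central $\alpha$-part.

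The core computation is the Reidemeister--Schreier step: with Schreier transversal the $p^p$ elements $\alpha_1^{n_1}\cdots\alpha_p^{n_p}$, I would carry out the rewriting, observe that the Schreier generators reduce to $\sigma_{i,n}:=\sigma_i^{\alpha_i^{n_i}}$, and show the Schreier relations become the four-term words $\sigma_{i,m+i}^{-1}\sigma_{j,n+i}^{-1}\sigma_{i,m+j}\sigma_{j,n+j}$. Then I would establish identity (\ref{eq:gs}) by a bare-hands computation in $\Gamma^p$ (expanding commutators of the stated products of $\sigma$'s and using that distinct-index $\sigma$'s commute) and use it, for each $\ell>0$, to solve for $\sigma_{i,\ell}$ in terms of $\sigma_i,\sigma_j,\sigma_k$ via (\ref{eq:sigmail}), choosing $j,k$ distinct from $i$ and from each other with $(j-i)(i-k)/2(j-k)\equiv\ell\pmod p$ --- this is where oddness of $p$ is used, both to invert $2$ and to guarantee such $j,k$ exist. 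This eliminates all $\sigma_{i,n}$ with $n\neq0$, leaving the finite generating set $\sigma_1,\dots,\sigma_p$ and the iterated relator set $\RELS$ as stated, so that $\ker\Phi$ is the normal closure of $\RELS$ in $\Delta$.

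Finally I would define the endomorphism $\Sigma\colon\Delta\to\Delta$ by $\Sigma(\sigma_i)=\sigma_{1,i}$ as expressed through (\ref{eq:sigmail}), check it is a well-defined homomorphism of the free group $\Delta$, and verify that it is a section of the first-coordinate projection of $\Phi$ in the sense that $\Phi(\Sigma(x))$ has $x$ in its first component (the remaining components being irrelevant $\alpha$-powers), so that $\Sigma$ descends to a (mono)morphism of $D=\langle t\rangle^G$. Invoking the scholium-style argument of \cite{Bar03}, Theorem 3.1, which reads off an ascending $L$-presentation from the data $(\Delta,\Sigma,\RELS)$, then yields the claimed presentation for $D$. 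The main obstacle I expect is the explicit identity (\ref{eq:gs}): getting the exponents exactly right (the $-2(j-k)e$ on $\sigma_{i,\cdot}$ versus $+2(j-k)e$ on $\sigma_i$, and the index $(j-i)(i-k)e$) requires careful bookkeeping of how conjugation by $\alpha_i$ shifts the $\sigma_{i,n}$ and how the mixed relators $[\sigma_i\alpha_j^{j-i},\sigma_j\alpha_i^{i-j}]$ collapse; a secondary subtlety is confirming that $\Delta$ really is free of rank $p$ on the $\tau^{\alpha^i}$ so that no relations among the $\sigma_i$ are lost, and that the chosen $\Sigma$ is consistent across the different valid choices of $j,k$ for a given $\ell$.
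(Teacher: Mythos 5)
Your proposal follows the same Reidemeister--Schreier strategy as the paper: the finitely presented cover $\Gamma=\langle\alpha,\tau\mid\alpha^p,\tau^p\rangle$, the map $\Phi:\Delta\to\Gamma^p$, the Schreier transversal $\alpha_1^{n_1}\cdots\alpha_p^{n_p}$, the reduction of the Schreier generators to the $\sigma_{i,n}$, the elimination of $\sigma_{i,\ell}$ for $\ell\neq0$ via the commutator identity, and the construction of $\Sigma$ from that identity are exactly the steps the paper carries out. The subtleties you flag (the exponent bookkeeping in the identity, the freeness of $\Delta$, and the use of oddness of $p$ to invert $2$ and choose $j,k$) are the right ones, so this is essentially the paper's own argument.
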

It is not possible to extend $\Sigma$ to an endomorphism of $\Gamma$.
However, the extension of a finitely $L$-presented group by a finite
group is again $L$-presented; in the present case, it is a simple matter, from
the $L$-presentation of $D$, to construct the split extension
$G=D\rtimes_\zeta\Z/p\Z$, in which the automorphism $\zeta$ of $D$
cyclically permutes the generators.

\def\cprime{$'$}


\vspace*{0.5cm}
\noindent
\begin{tabbing}
4444444454567890123451234545555555545\=45555555555555555555555555454444444444444444444\kill
Bettina Eick                       \> Ren$\acute{\rm e}$ Hartung \\
Institut Computational Mathematics \> Institut Computational Mathematics \\
University of Braunschweig         \> University of Braunschweig \\
38106 Braunschweig                 \> 38106 Braunschweig \\
Germany                            \> Germany \\
beick@tu-bs.de                     \> r.hartung@tu-bs.de \\
$\mbox{}$ \\
Laurent Bartholdi \\
Ecole Polytechnique Federale \\
CH-1015 Lausanne \\
Switzerland \\
laurent.bartholdi@epfl.ch
\end{tabbing}

\noindent
June 21, 2007

\end{document}